\newcommand{\subtitle}[1]{%
	\posttitle{%
		\par\end{center}
	\begin{center}\large#1\end{center}
	\vskip0.5em}%
}
\newtheorem{thm}{Theorem}[section]
\newtheorem{lemma}[thm]{Lemma}
\newtheorem{prop}[thm]{Proposition}
\newtheorem{cor}[thm]{Corollary}
\newtheorem{definition}[thm]{Definition}
\newtheorem{per}[thm]{Question}
\newtheorem{remark}[thm]{Remark}
\newcommand{\forces}{\mathrel{\Vdash}} 
\newcommand{\1}{\mathbbm{1}}      
\DeclareMathOperator{\diff}{diff}
\DeclareMathOperator{\acc}{acc}
\DeclareMathOperator{\domOp}{dom}
\DeclareMathOperator{\ranOp}{ran}
\newcommand\dom[1]{\domOp\left({#1}\right)}
\newcommand\ran[1]{\ranOp\left({#1}\right)}
\title{Partitions of topological spaces and a new club-like principle}
\author{Rodrigo Carvalho \\ Gabriel Fernandes\\ Lúcia R. Junqueira}
\date{April 2022}
\begin{document}

\maketitle
    
\begin{abstract}	We give a new proof of the following theorem due to W. Weiss and P. Komjath: if $X$ is a regular topological space, with character  $ < \mathfrak{b}$ and $X \rightarrow (top \omega + 1)^{1}_{\omega}$, then, for all $\alpha < \omega_1$, $X \rightarrow (top \alpha)^{1}_{\omega}$, fixing a gap in the original one. For that we consider a new decomposition of topological spaces. We also define a new combinatorial principle $\clubsuit_{F}$, and use it to prove that it is consistent with $\neg CH$ that $\mathfrak{b}$ is the optimal bound for the character of $X$. In \cite{WeissKomjath}, this was obtained using $\diamondsuit$.
\end{abstract}






\section{Introduction}

The topic of partitions of topological spaces is a nice example of the interaction of Ramsey theory and topology. One could consider the early appearances of this topic to date back to the 70's in the works of \cite{friedman1974closed} and \cite{prikry1975partitions} as well as some problems of \cite{102probmathlog}, dealing mainly with ordinals. Later this study would be expanded to properly include topological spaces as in \cite{nevsetvril1977ramsey} and a later discussion in \cite{PartThAppl}.  For a survey on  this topic we refer to  \cite{PartWeiss}.\\

\begin{definition}
	Let $X$ be a topological space and $\alpha$ an ordinal. The expression $$X \rightarrow (top \, \alpha)^{1}_{\kappa}$$ means that, for all $f \in \kappa^{X}$, there is an $\eta \in \kappa$ and a subset $Y \subset f^{-1}[\{\eta\}]$ such that $Y$ is homeomorphic to $\alpha$ with the order topology.
\end{definition}

In the first section of this paper we give a new proof for the following thm, due to W. Weiss and P. Komjáth, fixing an oversight in the original. We would like to thank W. Weiss for the suggestion given on how to fix the original proof. 

\begin{thm}\cite[Theorem 1]{WeissKomjath}\label{WKoriginal}
	Let $X$ be a regular topological space with $X \rightarrow (top \, \omega+1)^{1}_{\omega}$, and $\chi(X) < \mathfrak{b}$. Then $X \rightarrow (top \, \alpha)^{1}_{\omega}$ for all $\alpha < \omega_{1}$.
\end{thm}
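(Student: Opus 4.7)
The plan is transfinite induction on $\alpha<\omega_{1}$, the base case $\alpha=\omega+1$ being precisely the hypothesis. For the inductive step, I would assume $X\rightarrow(top\,\beta)^{1}_{\omega}$ for every $\beta<\alpha$, fix an arbitrary coloring $f\colon X\to\omega$, and aim to produce one color $\eta<\omega$ together with a homeomorphic copy of $\alpha$ inside $f^{-1}(\eta)$.

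The first ingredient I would develop is a localization/decomposition lemma. Using regularity and iterated application of the given arrow property, stratify $X$ by a Cantor--Bendixson-type rank and isolate the set $X_{0}$ of points $p$ such that every open neighborhood $U$ of $p$ already satisfies $U\rightarrow(top\,\omega+1)^{1}_{\omega}$. This is where I expect the ``new decomposition of topological spaces'' advertised in the abstract to live; inside $X_{0}$ one can, for each $\beta<\alpha$ and each neighborhood of a given point, deploy the induction hypothesis to find a monochromatic copy of $\beta$ arbitrarily close to that point. The copy of $\alpha$ itself is then built by recursion along $\alpha$, organized via its Cantor Normal Form: at successor stages one adjoins an isolated point using the $\omega+1$ arrow hypothesis applied to an appropriate open set; at limit stages one must exhibit a limit point $p$ at which a sequence of previously built monochromatic pieces converges, all sharing a single color.

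The main obstacle is precisely the limit stage, and this is where $\chi(X)<\mathfrak{b}$ becomes indispensable. At a candidate limit point $p$, fix a neighborhood base $\{U_{\xi}:\xi<\kappa\}$ with $\kappa=\chi(p,X)<\mathfrak{b}$, and for each $\xi$ use the inductive hypothesis on a suitable open subset of $U_{\xi}$ to produce a monochromatic piece $Y_{\xi}$ of the required smaller order type. After fixing a decreasing basic system at $p$ enumerated by $\omega$, each $Y_{\xi}$ yields a function $g_{\xi}\in\omega^{\omega}$ recording how deep within this enumeration its points are placed. Since $\kappa<\mathfrak{b}$, the family $\{g_{\xi}:\xi<\kappa\}$ is dominated by some $g\in\omega^{\omega}$, and $g$ guides a diagonal selection of $\omega$ many pieces $Y_{\xi_{n}}$ whose union genuinely converges to $p$ in the topology of $X$. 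A final pigeonhole on the countably many outer colors, combined with the color of $p$, isolates the sought-after single $\eta$, completing the desired monochromatic copy of $\alpha$ inside $f^{-1}(\eta)$. The delicate point --- and presumably the one where the original argument had the oversight --- is ensuring that the diagonal selection simultaneously respects both the topological convergence to $p$ and the ordinal structure inherited from the pieces, so that the union really is homeomorphic to $\alpha$ rather than to some smaller ordinal.
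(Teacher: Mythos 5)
Your outline has the right general flavour (a Cantor--Bendixson-type stratification, recursion along the Cantor normal form, and a domination argument at limit stages), but both load-bearing steps contain genuine gaps. First, the decomposition. Your set $X_{0}$ of points at which the arrow relation localizes is not shown to be nonempty, and more importantly it does not address the actual obstruction --- which is precisely the oversight in the original Komj\'ath--Weiss proof. A point that is non-isolated, or at which the arrow localizes, need not admit \emph{any} injective sequence converging to it: $\chi(X)<\mathfrak{b}$ gives neither countable tightness nor sequences. The paper therefore replaces the classical derivative by a \emph{sequential} one, applied to each colour class $X_{n}=f^{-1}[\{n\}]$ separately: $SI_{0}(Y)$ is the set of points with no injective sequence of $Y$ converging to them, and the levels are iterated. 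The proof then splits into two cases: if some $X_{n}$ has nonempty S-perfect kernel, Lemma \ref{Alpha} (regular, S-perfect, $\chi<\mathfrak{b}$ implies copies of every countable $\alpha$ in every open set) finishes; if every $X_{n}$ is S-scattered, some $X_{j}$ must have sequential height $\geq\omega_{1}$, since otherwise the levels $SI_{\alpha}(X_{n})$ form a countable partition none of whose pieces contains a sequence converging to one of its own points, contradicting $X\rightarrow(top\,\omega+1)^{1}_{\omega}$; the recursion is then run along the levels of that single $X_{j}$, using Proposition \ref{PropViz} to extract anchoring sequences from lower levels. Working inside one colour class from the outset is essential: your final pigeonhole can force infinitely many pieces to share a colour, but it cannot force that colour to agree with $f(p)$, and if it does not, $p$ cannot close off the copy.

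Second, the limit-stage bookkeeping is transposed. You fix a neighbourhood base of size $\kappa<\mathfrak{b}$ at $p$ \emph{and} ``a decreasing basic system at $p$ enumerated by $\omega$''; the latter would mean $\chi(p,X)=\aleph_{0}$ and make the appeal to $\mathfrak{b}$ at $p$ vacuous. You also index the pieces by $\xi<\kappa$ and attach one function $g_{\xi}$ to each piece, then select $\omega$ of them. In the working argument the pieces are $\omega$-indexed: an injective sequence $s$ converging to $x$ is fixed (this is what S-perfectness, i.e.\ the sequential decomposition, supplies), disjoint closed neighbourhoods $U_{t}$ of the $s(t)$ are chosen by regularity, and copies $f_{t}$ of the smaller ordinal are placed in the $U_{t}$ with $f_{t}$ sending its top point to $s(t)$. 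The functions $p_{\rho}\colon\omega\rightarrow\omega$ are instead indexed by the $<\mathfrak{b}$ many basic neighbourhoods $W_{\rho}$ of $x$, with $p_{\rho}(t)$ recording how far into the $t$-th piece one must go before its tail enters $W_{\rho}$. A single $g$ dominating all the $p_{\rho}$ then trims every piece uniformly so that the trimmed union, together with $x$, is homeomorphic to the target ordinal. Without the anchoring injective sequence in the correct colour class and level, there is nothing for your diagonal selection to converge to, and the domination argument has no family of functions of the right shape to apply to.
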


In the proof we revisit the concept of the Cantor-Bendixson decomposition as a process that removes undesirable elements in our topological space. This will allow us to question what undesirability means (an isolated point in the classical definition), and consider a new construction considering convergent sequences.\\

In  \cite[Theorem 4]{WeissKomjath},  using $\diamondsuit$, they construct an example that we can not improve Theorem \ref{WKoriginal} allowing on its hypothesis that $\chi(X)=\mathfrak{b}$.  

\begin{thm}\cite[Theorem 4]{WeissKomjath} \label{RamseyOriginal} Suppose $\diamondsuit$ holds. Then there exists a topological space $(X,\tau)$ such that:  $\chi(X)=\mathfrak{b}$, $(X,\tau)\rightarrow (\omega+1)^{1}_{\omega}$,
	and $(X,\tau)\nrightarrow(\omega^{2}+1)^{1}_{\omega}.$\\
\end{thm}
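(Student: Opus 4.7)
The plan is to perform a transfinite recursion on $\omega_1$ guided by a $\diamondsuit$-sequence, producing simultaneously a topology $\tau$ on the set $X=\omega_1$ and a coloring $c\colon X\to\omega$ which will witness the failure of the $(\omega^2+1)^1_\omega$-arrow. Since $\diamondsuit$ implies CH we have $\mathfrak{b}=\omega_1$, so ``$\chi(X)=\mathfrak{b}$'' asks that every point have character at most $\omega_1$ with at least one point attaining $\omega_1$. Fix $\langle S_\alpha:\alpha<\omega_1\rangle$ witnessing $\diamondsuit$, and at each stage $\alpha$ interpret $S_\alpha$ as coding a candidate pair $(g_\alpha,h_\alpha)$, where $g_\alpha\colon\alpha\to\omega$ is a partial coloring and $h_\alpha\colon\omega^2\to\alpha$ is the $\omega^2$-restriction of a candidate topological embedding of $\omega^2+1$ whose intended apex is the new point $\alpha$.

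At stage $\alpha$, with the topology on $\alpha$ and the partial coloring $c\restriction\alpha$ already defined, I would choose both a neighborhood base at $\alpha$ and the value $c(\alpha)$ as follows. Generically, I attach to $\alpha$ an $\omega$-sequence from below so that $\alpha$ is a simple convergent limit; this keeps the character countable at $\alpha$ and enlarges the supply of convergent sequences needed for the positive arrow. On a fixed stationary set of stages I instead give $\alpha$ character $\omega_1$, declaring its neighborhood base to be an $\omega_1$-indexed decreasing filter selected cofinally from the previously built convergent sequences. Finally, whenever the diamond guess $(g_\alpha,h_\alpha)$ coherently describes an $\omega^2$-piece of a topological embedding with intended apex $\alpha$ that is $g_\alpha$-monochromatic in some color $k$, I force $c(\alpha)\neq k$ and thin the chosen convergent sequence at $\alpha$ so as to avoid containing cofinitely many of the column-limits $h_\alpha(\omega\cdot n)$.

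To verify $(X,\tau)\to(\omega+1)^1_\omega$, given any $f\colon X\to\omega$ I pick a point $p$ of character $\omega_1$; its $\omega_1$ basic neighborhoods are carved from the constructed convergent sequences, and by a pigeonhole/cofinal-refinement argument some color class accumulates at $p$ along a subsequence that forms a monochromatic topological $\omega+1$. For $(X,\tau)\not\to(\omega^2+1)^1_\omega$, the coloring $c$ is the witness: any candidate embedding $h\colon\omega^2+1\to X$ has countable range, so the club of ordinals closed under $h$ and $c$ meets the stationary set of stages correctly guessed by $\diamondsuit$; at such a stage $\alpha=h(\omega^2)$ the construction deliberately broke monochromaticity at the apex, giving a contradiction.

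The main obstacle I expect is the tension between the two opposing pressures: the convergent-sequence infrastructure must be rich enough that every coloring admits a monochromatic $\omega+1$, yet structurally sparse enough that no single point can be the monochromatic limit of $\omega$-many monochromatic convergent sequences sharing one color. The delicate step is showing that the diamond capture indeed sees every potential $\omega^2+1$-copy in the final space and that the single local choice at each stage (one color and one cofinal $\omega$-sequence at the apex candidate) simultaneously defeats the predicted threat, preserves regularity of $\tau$, and respects the character upper bound $\omega_1$ throughout the construction.
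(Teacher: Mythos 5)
Your architecture inverts the roles that the guessing principle and the topology play in the actual construction, and this inversion creates two genuine gaps. In the proof of \cite[Theorem 4]{WeissKomjath} (and in the $\clubsuit_{F}$ variant given in Theorem \ref{exampleclub}), the guessing sequence is used for the \emph{positive} arrow: at each limit $\alpha$ one builds a convergent sequence $s$ to $\alpha$ whose terms are drawn from the guessed sets, and declares the new neighbourhoods of $\alpha$ to be the sets $N_{\alpha}(h,p)=\{\alpha\}\cup\bigcup\{(\beta^{m}_{h(m)},s(m)] : m\geq p\}$ with $h\in\omega^{\omega}$. When the principle correctly guesses a colour class $f^{-1}[\{n\}]$ at some $\alpha\in f^{-1}[\{n\}]$, a monochromatic convergent $\omega$-sequence to $\alpha$ is already wired into the topology, and that is the whole proof of $(X,\tau)\rightarrow(\omega+1)^{1}_{\omega}$. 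Your positive arrow, by contrast, rests on ``pick a point of character $\omega_{1}$ and pigeonhole,'' which is not an argument: a point of character $\omega_{1}$ lying in the closure of a colour class need not be the limit of any convergent sequence from that class, and an adversary colouring can meet every one of your designated convergent sequences in every colour infinitely often while no colour class contains a whole sequence together with its limit. Without aiming the guessing device at the colouring, the positive relation simply does not follow.

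Second, your negative arrow via a recursively constructed colouring $c$ does not close. The $\diamondsuit$-capture gives a stationary set of stages $\alpha$ at which $S_{\alpha}$ correctly codes $(c\restriction\alpha,h\restriction\cdots)$, but the action you take at such a stage (forcing $c(\alpha)\neq k$ and thinning the sequence attached to $\alpha$) only defeats a copy whose topological apex is that very $\alpha$. The apex $h(\omega^{2})$ of a given copy is a single fixed ordinal determined by $h$; nothing guarantees it lies in the set of correctly guessed stages, and the $\alpha$ produced by intersecting the club of $h$-closed ordinals with the guessing set is in general not $h(\omega^{2})$, so the diagonalization performed there never touches the copy. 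The actual construction sidesteps both problems at once: the neighbourhoods $N_{\alpha}(h,p)$, quantified over all $h\in\omega^{\omega}$, ensure the space contains \emph{no} copy of $\omega^{2}+1$ at all (so the negative arrow holds without any special colouring), and this $\omega^{\omega}$-parameterization is also exactly where $\mathfrak{b}$ enters the computation of the character. I would reorganize along these lines rather than try to repair the diagonalization.
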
 

It is natural then to ask whether we can obtain a space $(X,\tau)$ with the Ramsey properties mentioned above without the full power of $\diamondsuit$ or even without CH. To the best of our knowledge it was still unknown whether such space could be constructed without it. In a private communication W. Weiss asked the third author if it was possible to get an example just using $\clubsuit$ \\

To answer that, in section \ref{Comb} we define a new combinatorial principle $\clubsuit_{F}$. Such principle captures the main usage of $\diamondsuit$ in the proof of Theorem \ref{RamseyOriginal}. In section \ref{Comb} we prove that $\clubsuit_{F}$ is a consequence of $\diamondsuit^{*}$ and the consistency of $\clubsuit_{F}$ with $\neg CH$. For this we reference the $CS^{*}$-iterated forcings done in \cite{fuchino} and \cite{chen}.\\

In section \ref{Top} we use this new principle to get what we wanted Theorem \ref{Cor1} and Theorem \ref{main1} show that it is consistent to have a topological space $(X,\tau)$ with the above Ramsey properties together with $\chi(X)=\mathfrak{b} = \omega_{1}$ and $\neg CH$.  \\

\section{Sequential Cantor-Bendixson and Partitions}\label{CB}

In this section, we construct what we call the sequential Cantor-Bendixson decomposition, following the suggestion given by W. Weiss. Instead of considering only isolated points to be undesirable, we also do so for the points in our topological space that do not have $\omega$-sequences converging to them. The next definitions are in parallel with the original notation. 

\begin{definition}
	
	Given a topological space $Y$, using recursion on $\alpha$ ordinal define:\\

	$SI_{0}(Y) = \{x \in Y :$ there is no $s:\omega \rightarrow Y$ injective sequence converging to $x\}$, and \\
	
	$SI_{\alpha}(Y) = SI_{0}(Y^{\alpha}_{S})$, where $Y^{\alpha}_{S} = Y \setminus \bigcup\{SI_{\beta}(Y) : \beta < \alpha\}$ \\
	
	We set the sequential height of a topological space $Y$ as\\
	
	$Sh(Y) = min\{\alpha : SI_{\alpha}(Y) = \emptyset\}$.
	
\end{definition}

Note that, as in the classical construction, the height is well defined. Indeed, $\{\alpha : SI_{\alpha}(Y) = \emptyset \} \neq \emptyset$ since $|Y|^{+}$ belongs to it.

\begin{definition}
	
	A topological space $Y$ is said to be S-scattered if $Y^{Sh(Y)}_{S} = \emptyset$ or, equivalently, if $Y = \bigcup\{SI_{\alpha}(Y) : \alpha < Sh(Y)\}$.
	
\end{definition}

Observe that if $\alpha > 0$, then, for all $x \in SI_{\alpha}(Y)$, there must be an injective sequence $s:\omega \rightarrow Y$, such that $s$ converges to $x$ and $s[\omega] \subset \bigcup\{SI_{\beta}(Y) : \beta < \alpha\}$. As we show next:

\begin{prop}\label{PropViz}
	
	Let $Y$ be a S-scattered topological space, $\alpha > 0$ and $x \in SI_{\alpha}(Y)$. Then, for all $\beta < \alpha$, there is an injective sequence $s: \omega \rightarrow Y$ converging to $x$, such that\linebreak $s[\omega] \subset \bigcup\{SI_{\gamma}(Y) : \gamma \in [\beta, \alpha)\}$.
	
\end{prop}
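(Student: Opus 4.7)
The strategy is to work inside the space $Y^{\beta}_{S}$ and first produce an injective sequence there converging to $x$, then prune its terms so that its range lies in $Y^{\beta}_{S} \setminus Y^{\alpha}_{S}$. Observe that the levels $SI_{\gamma}(Y)$ are pairwise disjoint, since each $SI_{\gamma}(Y)$ is contained in $Y^{\gamma}_{S}$, which by construction excludes all earlier levels. Consequently $Y^{\beta}_{S} \setminus Y^{\alpha}_{S} = \bigcup \{SI_{\gamma}(Y) : \gamma \in [\beta, \alpha)\}$, so it suffices to exhibit an injective sequence in $Y^{\beta}_{S}$ converging to $x$ whose range avoids $Y^{\alpha}_{S}$.

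For the first step, $\beta < \alpha$ gives $Y^{\alpha}_{S} \subseteq Y^{\beta}_{S}$, and therefore $x \in Y^{\beta}_{S}$. Disjointness of the levels then forces $x \notin SI_{\beta}(Y) = SI_{0}(Y^{\beta}_{S})$, which by the very definition of $SI_{0}$ means that there exists an injective sequence $s : \omega \to Y^{\beta}_{S}$ with $s_{n} \to x$. This is the candidate to be trimmed.

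For the second step, I would argue by contradiction: if infinitely many $s_{n}$ lay in $Y^{\alpha}_{S}$, then the subsequence indexed by those $n$ would itself be an injective sequence contained in $Y^{\alpha}_{S}$ and converging to $x$, contradicting $x \in SI_{\alpha}(Y) = SI_{0}(Y^{\alpha}_{S})$. Hence only finitely many terms of $s$ sit in $Y^{\alpha}_{S}$, and discarding them leaves an injective tail in $Y^{\beta}_{S} \setminus Y^{\alpha}_{S}$ still converging to $x$, as required. The main bookkeeping subtlety, as I see it, is keeping straight the distinction between being sequentially isolated in $Y$ versus in the relativized space $Y^{\beta}_{S}$; once this is internalized, both steps are essentially immediate from the definitions, and the S-scattered hypothesis enters only implicitly by guaranteeing that $x$ has a well-defined level $\alpha$.
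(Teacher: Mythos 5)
Your proof is correct, and it is organized differently from the paper's. The paper argues by contradiction: assuming no suitable sequence exists for some $\beta$, it shows every injective sequence converging to $x$ meets $\bigcup\{SI_{\gamma}(Y):\gamma\in[\beta,Sh(Y))\}$ only finitely often, and then uses the S-scattered hypothesis (which identifies $Y^{\beta}_{S}$ with that union) to conclude that no sequence in $Y^{\beta}_{S}$ converges to $x$, forcing the absurdity $x\in SI_{\beta}(Y)$. You instead run the argument directly: disjointness of the levels gives $x\in Y^{\beta}_{S}\setminus SI_{0}(Y^{\beta}_{S})$, hence an injective sequence in $Y^{\beta}_{S}$ converging to $x$, and at most finitely many of its terms can lie in $Y^{\alpha}_{S}$ lest $x\notin SI_{0}(Y^{\alpha}_{S})$; pruning those finitely many terms and using $Y^{\beta}_{S}\setminus Y^{\alpha}_{S}=\bigcup\{SI_{\gamma}(Y):\gamma\in[\beta,\alpha)\}$ finishes the proof. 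The two proofs share the same two key observations ($x\notin SI_{\beta}(Y)$, and finiteness of the overlap with $Y^{\alpha}_{S}$), but your version buys a small generalization: it never needs $Y$ to be S-scattered, only that $x\in SI_{\alpha}(Y)$, whereas the paper's contradiction route genuinely invokes S-scatteredness to rule out terms falling into the S-perfect kernel. Your closing remark that S-scatteredness enters "implicitly" is therefore too modest --- in your argument it is not needed at all.
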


\begin{proof}
	
	Suppose that there is a $\beta < \alpha$ such that, for all injective sequences $s$ converging to $x$, we have $s[\omega] \not\subset \bigcup\{SI_{\gamma}(Y) : \gamma \in [\beta, \alpha)\}$. In particular, we must have\linebreak $|s[\omega] \cap (\bigcup\{SI_{\gamma}(Y) : \gamma \in [\beta, Sh(Y))\})| < \aleph_{0}$; otherwise, we could take a convergent sub-sequence that contradicts either the hypothesis or the fact that $x \in SI_{\alpha}(Y)$.
	
	This condition assures us that there are no sequences in $Y^{\beta}_{S}$ converging to $x$ and, therefore, in the worst scenario, $x \in SI_{\beta}(Y)$. That is an absurd; hence, the thesis must hold.
	
\end{proof}

Analogous to the scattered and perfect decomposition of a space, we define:
\begin{definition}
	A topological space $Y$ is S-perfect if, for all $x \in Y$, there is an injective sequence converging to $x$.
\end{definition} 

Note that the sequential Cantor-Bendixson decomposition of a space gives the following:

\begin{cor}
	Given a topological space $Y$, $Y^{Sh(Y)}_{S}$ is S-perfect.
\end{cor}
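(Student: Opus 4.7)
The plan is to simply unfold the relevant definitions. Writing $\alpha = Sh(Y)$, I want to show that every point $x \in Y^{\alpha}_{S}$ admits an injective sequence in $Y^{\alpha}_{S}$ converging to $x$, which is exactly the condition for $Y^{\alpha}_{S}$ to be S-perfect.

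First I would handle the trivial case $\alpha = 0$. Here $Y^{0}_{S} = Y \setminus \emptyset = Y$, and by the definition of $Sh(Y)$ we have $SI_{0}(Y) = \emptyset$. Reading off the definition of $SI_{0}$, this says no $x \in Y$ fails to have an injective sequence converging to it, so $Y = Y^{0}_{S}$ is S-perfect directly from the definition.

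For the main case $\alpha > 0$, the key observation is the recursive definition: $SI_{\alpha}(Y) = SI_{0}(Y^{\alpha}_{S})$. By minimality of $Sh(Y)$, the left-hand side is empty, so $SI_{0}(Y^{\alpha}_{S}) = \emptyset$. Unpacking $SI_{0}$ on the space $Y^{\alpha}_{S}$, this means that for every $x \in Y^{\alpha}_{S}$ there exists an injective sequence $s:\omega \to Y^{\alpha}_{S}$ converging to $x$, which is precisely the defining property of S-perfectness for $Y^{\alpha}_{S}$.

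There is effectively no obstacle here; the only point worth double-checking is the mild topological subtlety that the convergence appearing in $SI_{0}(Y^{\alpha}_{S})$ is convergence within $Y^{\alpha}_{S}$ (with its subspace topology), which is what the notion of S-perfect for the space $Y^{\alpha}_{S}$ requires. Since a sequence contained in $Y^{\alpha}_{S}$ converges to a point of $Y^{\alpha}_{S}$ in the subspace topology if and only if it does so in the ambient topology of $Y$, the definitions match up cleanly and no further argument is needed.
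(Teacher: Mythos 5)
Your proof is correct and is exactly the definitional unfolding the paper intends (the corollary is stated there without proof, as an immediate consequence of $SI_{Sh(Y)}(Y)=SI_{0}\bigl(Y^{Sh(Y)}_{S}\bigr)=\emptyset$). The remark about convergence in the subspace versus the ambient topology is a sensible point to check and is handled correctly.
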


We also have the following property:

\begin{prop}
	Let $Y$ be a topological space and $\alpha < Sh(Y)$. If $\alpha + 1 < Sh(Y)$ then $|SI_{\alpha}(Y)| \geq \aleph_{0}$. 
\end{prop}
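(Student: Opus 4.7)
The plan is to exploit the nesting structure of the sequential Cantor--Bendixson derivatives together with the fact that levels $SI_{\beta}(Y)$ are pairwise disjoint (which is immediate from $SI_{\beta}(Y)\subseteq Y^{\beta}_{S}$ and the definition of $Y^{\beta}_{S}$). From $\alpha+1<Sh(Y)$ and the minimality of $Sh(Y)$, the level $SI_{\alpha+1}(Y)$ is nonempty, so I would fix a witness $x\in SI_{\alpha+1}(Y)$ and produce the required $\aleph_{0}$ many points inside $SI_{\alpha}(Y)$ by constructing an injective sequence into $SI_{\alpha}(Y)$ converging to $x$.

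The key observation is that, because the levels are disjoint, $x\in SI_{\alpha+1}(Y)\subseteq Y^{\alpha+1}_{S}\subseteq Y^{\alpha}_{S}$ and $x\notin SI_{\alpha}(Y)=SI_{0}(Y^{\alpha}_{S})$. By the very definition of $SI_{0}$, this forces the existence of an injective sequence $s\colon\omega\to Y^{\alpha}_{S}$ converging to $x$. Now split $Y^{\alpha}_{S}$ as the disjoint union $SI_{\alpha}(Y)\cup Y^{\alpha+1}_{S}$, and examine how the range of $s$ distributes between these two pieces.

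If infinitely many terms of $s$ lay in $Y^{\alpha+1}_{S}$, I could pass to a subsequence to obtain an injective sequence inside $Y^{\alpha+1}_{S}$ still converging to $x$; this would contradict $x\in SI_{0}(Y^{\alpha+1}_{S})=SI_{\alpha+1}(Y)$. Hence only finitely many terms of $s$ lie in $Y^{\alpha+1}_{S}$, and almost all terms of $s$ must lie in $SI_{\alpha}(Y)$. Since $s$ is injective, this delivers $\aleph_{0}$ many distinct elements of $SI_{\alpha}(Y)$, which is exactly the desired conclusion.

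There is no substantive obstacle here: the whole argument is a bookkeeping exercise about how the derivative operator separates $x$ at stage $\alpha+1$ rather than $\alpha$. The only place that requires care is making sure we are invoking $SI_{0}$ relative to the correct ambient set at each step (namely $Y^{\alpha}_{S}$ versus $Y^{\alpha+1}_{S}$), which is precisely the same disjointness/nesting reasoning already used implicitly in Proposition~\ref{PropViz}, so no extra hypothesis such as $Y$ being S-scattered is needed.
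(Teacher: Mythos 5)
Your proof is correct and follows the same basic strategy as the paper's: pick $x\in SI_{\alpha+1}(Y)$ (nonempty since $\alpha+1<Sh(Y)$) and exhibit an injective sequence in $SI_{\alpha}(Y)$ converging to $x$, whose range then witnesses $|SI_{\alpha}(Y)|\geq\aleph_{0}$. The difference is in how that sequence is obtained: the paper simply invokes Proposition \ref{PropViz} with $\beta=\alpha$, whereas you unfold the argument, observing that $x\notin SI_{0}(Y^{\alpha}_{S})$ yields an injective sequence in $Y^{\alpha}_{S}=SI_{\alpha}(Y)\sqcup Y^{\alpha+1}_{S}$ converging to $x$, and that all but finitely many of its terms must fall in $SI_{\alpha}(Y)$, since otherwise an injective subsequence inside $Y^{\alpha+1}_{S}$ would contradict $x\in SI_{0}(Y^{\alpha+1}_{S})$. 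Your inlined version has a small advantage worth noting: Proposition \ref{PropViz} is stated only for S-scattered spaces, a hypothesis absent from the present proposition, so the paper's appeal to it is, strictly speaking, out of scope, while your direct two-level splitting shows that the special case actually needed here requires no such assumption. Beyond that, both arguments rest on the same bookkeeping about the disjoint decomposition of $Y^{\alpha}_{S}$ into $SI_{\alpha}(Y)$ and $Y^{\alpha+1}_{S}$.
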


\begin{proof}
	Note that $\alpha + 1 < Sh(Y)$ implies $SI_{\alpha+1}(Y) \neq \emptyset$. Let $x \in SI_{\alpha+1}$. By Proposition \ref{PropViz}, we have $s:\omega \rightarrow Y$ an injective sequence such that $s$ converges to $x$ and $s[\omega] \subset SI_{\alpha}(Y)$. Now $\aleph_{0} \leq |s[\omega]| \leq |SI_{\alpha}(Y)|$.
	
\end{proof}

Next we shall address the problem found in the proof of \cite[Theorem 1]{WeissKomjath}. Using the regular Cantor-Bendixson decomposition and the restriction on the character it is not possible to guarantee the existence of a converging sequence that was needed in certain steps of the construction. Hence we need to work directly with sequences in the decomposition.

We will first prove an auxiliary lemma, using a similar argument done in \cite{WeissKomjath}.

\begin{lemma} \label{Alpha}
	Let $X$ be a regular, S-perfect, non empty topological space. If $\chi(X) < \mathfrak{b}$, then, for all $\alpha < \omega_{1}$, and for all open set $V \subseteq X$ there exists $\Phi:\alpha \rightarrow V$ such that $\Phi$ is a homeomorphism. 
\end{lemma}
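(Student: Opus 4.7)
The strategy is transfinite induction on $\alpha < \omega_{1}$, with the open set $V$ universally quantified at each stage. The finite base cases are immediate: since $X$ is S-perfect it has no isolated points, so every nonempty open $V$ is infinite, and for $\alpha < \omega$ any $\alpha$ distinct points of $V$ yield the required homeomorphism.

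For the inductive step I would split into three cases. If $\alpha$ is a limit, pick successor ordinals $\alpha_{n} \nearrow \alpha$ with $\alpha_{0} = 0$; the intervals $I_{n} = [\alpha_{n}, \alpha_{n+1})$ are clopen in $\alpha$ and each is homeomorphic to some $\gamma_{n} < \alpha$. Using regularity and the fact that $V$ is infinite, pick open $V_{n} \subseteq V$ with pairwise disjoint closures, apply the inductive hypothesis to obtain embeddings $\Phi_{n} : \gamma_{n} \to V_{n}$, and take $\Phi = \bigcup_{n} \Phi_{n}$; the concatenation is a homeomorphism onto its image because the pieces $\overline{V_{n}}$ are mutually separated. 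If $\alpha = \beta + 1$ with $\beta$ a successor, then $\beta$ is isolated in $\alpha$; apply IH to embed $\beta$ inside some open $V' \subseteq V$ chosen so that $V \setminus \overline{V'} \neq \emptyset$, then add any point of $V \setminus \overline{V'}$ as the image of $\beta$.

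The substantive case is $\alpha = \beta + 1$ with $\beta$ a limit, and this is where the hypothesis $\chi(X) < \mathfrak{b}$ plays a role. Fix $x \in V$ to be the eventual image of $\beta$; by S-perfectness pick an injective sequence $s_{n} \to x$ with $s_{n} \in V$; fix a neighborhood base $\{U_{\xi} : \xi < \chi\}$ at $x$ of size $\chi = \chi(x, X) < \mathfrak{b}$ with $U_{0} \subseteq V$; and write $\beta = \sup_{n} \beta_{n}$ along successor ordinals. The goal is to produce, for each $n$, an open $W_{n} \ni s_{n}$ together with an embedding $\Phi_{n} : [\beta_{n}, \beta_{n+1}) \to W_{n}$ (supplied by IH applied to the ordinal $\beta_{n+1} - \beta_{n} < \beta$), such that the closures $\overline{W_{n}}$ are pairwise disjoint and all avoid $x$, and, crucially, for every $\xi < \chi$ one has $W_{n} \subseteq U_{\xi}$ for cofinitely many $n$. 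Once this is arranged, $\Phi := \bigcup_{n} \Phi_{n} \cup \{(\beta, x)\}$ is continuous at $\beta$; compactness of $\beta + 1$ and Hausdorffness of $X$ then upgrade the continuous injection to a homeomorphism onto its image.

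The main obstacle is producing the $W_{n}$'s with the shrinking property, and this is where $\chi < \mathfrak{b}$ is essential. The plan is to associate with each $\xi < \chi$ a function $f_{\xi} \in \omega^{\omega}$ measuring how finely the neighborhood of $s_{n}$ must be refined -- using iterated regularity to nest closures inside $U_{\xi}$ -- in order to lie within $U_{\xi}$; since the family $\{f_{\xi} : \xi < \chi\}$ has cardinality less than $\mathfrak{b}$, it is eventually dominated by some $g \in \omega^{\omega}$. Choosing each $W_{n}$ at ``refinement depth'' at least $g(n)$ then forces $W_{n} \subseteq U_{\xi}$ whenever $g(n) \geq f_{\xi}(n)$, hence for all but finitely many $n$. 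The remaining bookkeeping -- maintaining pairwise disjoint closures throughout the inductive choice of $W_{n}$, injectivity of $\Phi$, and continuity away from $\beta$ -- is routine.
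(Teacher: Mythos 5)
There is a genuine gap at the step you yourself identify as the crux: the ``refinement depth'' functions $f_{\xi}$ need not be well defined. For a fixed $n$, iterated regularity gives you a countable decreasing chain $W_{n}^{0}\supseteq \overline{W_{n}^{1}}\supseteq W_{n}^{1}\supseteq\cdots$ of neighbourhoods of $s_{n}$, but nothing forces any member of this chain to be contained in a given basic neighbourhood $U_{\xi}$ of $x$: the space is only assumed to have character $<\mathfrak{b}$, which may well be uncountable, so $X$ need not be first countable at $s_{n}$ and the chain $(W_{n}^{k})_{k}$ need not refine the trace of $\{U_{\xi}:\xi<\chi\}$ at $s_{n}$. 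Concretely, if the sets $U_{\xi}$ trace out on a neighbourhood of $s_{n}$ a strictly decreasing $\omega_{1}$-chain whose intersection is not a neighbourhood of $s_{n}$, then for some $k_{0}$ uncountably many $\xi$ would have to satisfy $W_{n}^{k_{0}}\subseteq U_{\xi}$, forcing $W_{n}^{k_{0}}$ into that intersection --- impossible. So the finite value $f_{\xi}(n)$ may simply fail to exist, and the whole domination argument collapses. (This is essentially the kind of oversight the paper is at pains to repair.)

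The paper's proof circumvents exactly this. It does not try to shrink ambient open sets around $s(t)$; instead it reduces, via the Cantor normal form, to $\alpha=\omega^{\beta_{1}}$ or $\omega^{\beta_{1}}+1$, and builds inside disjoint $U_{t}\ni s(t)$ copies $f_{t}:\omega^{\gamma}+1\rightarrow U_{t}$ whose \emph{top point} is $s(t)$. The quantity measured against $\mathfrak{b}$ is then $p_{\rho}(t)=\min\{i:\forall\delta\geq h(i)\,(f_{t}(\delta)\in W_{\rho})\}$, where $h:\omega\rightarrow\omega^{\gamma}$ is cofinal; this minimum exists because $f_{t}$ is continuous at $\omega^{\gamma}$ and $f_{t}(\omega^{\gamma})=s(t)\in W_{\rho}$, so $f_{t}^{-1}[W_{\rho}]$ contains a tail of $\omega^{\gamma}+1$. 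After dominating the $p_{\rho}$'s by $g$, one discards the initial segment $f_{t}[[0,h(g(t))]]$ of each block; additive indecomposability of $\omega^{\gamma}$ guarantees the remaining tail still has order type $\omega^{\gamma}$, so the glued map is still a copy of $\omega^{\beta_{1}}(+1)$. Your decomposition of an arbitrary limit $\beta$ into intervals $[\beta_{n},\beta_{n+1})$ does not admit this tail-trimming (trimming changes the order type of a non-indecomposable block), which is why the normal-form reduction, the requirement $f_{\beta}(\gamma)=x$ in the induction hypothesis, and the cofinal map $h$ are not optional conveniences but the substance of the argument. To repair your proof you would need to replace the shrinking of the $W_{n}$'s by this (or an equivalent) device.
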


\begin{proof}
	The copies of $\alpha$ will be recursively constructed. Fix $\alpha$ and suppose that, for all $\beta < \alpha$, $x \in X$ and open neighbourhoods $V$ of $x$, there are $f_{\beta}: \beta \rightarrow V$ homeomorphism such that if $\beta$ is the successor of a limit ordinal $\gamma$ then $f_{\beta}(\gamma) = x$. Fix $x \in X$ and $V$ open neighbourhood of $x$.
	
	Using the cantor normal decomposition consider $\alpha = \omega^{\beta_{1}}.n_{1} + \cdots + \omega^{\beta_{k}}.n_{k}$. We will analyse two cases, $\alpha = \omega^{\beta_{1}}.1 + 1$ or $\alpha = \omega^{\beta_{1}}.1$, and the case otherwise. For the latter case, using the S-perfectness fix an injective sequence $s$ converging to $x$ such that $s[\omega] \subset V$. By the injectiveness of $s$ and regularity of $X$, fix $y \in V$ and open sets $W_{1}$, $W_{2}$ satisfying $y \in W_{1} \subset \overline{W_{1}} \subset V$, $x \in W_{2} \subset \overline{W_{2}} \subset V$ with $\overline{W_{1}} \cap \overline{W_{2}} = \emptyset$. We can easily split $\alpha$ into two smaller ordinals and use the hypothesis of the recursion to obtain their copies in each of the disjoint open sets above. Using these copies we construct a copy of $\alpha$ as desired. 
	
	\vspace*{0,25cm}
	
	Now we address the first case by considering whether or not $\beta_{1}$ is a limit ordinal. Suppose $\beta_{1} = \gamma + 1$. Now $(\omega^{\gamma}.n : n \in \omega)$ is a sequence converging to $\omega^{\beta_{1}}$. By the S-perfectness of $X$, fix $s:\omega \rightarrow X$, an injective sequence in $V\setminus \{x\}$ converging to $x$. The regularity of $X$ yields open sets $\{U_{n} : n \in \omega\}$, such that $\overline{U_{i}} \cap \overline{U_{j}} = \emptyset$, if $i \neq j$, and $s(n) \in U_{n} \subset \overline{U_{n}} \subset V$ for all $n \in \omega$. Using the assumption made in the recursion, we take $f_{i}: \omega^{\gamma} + 1 \rightarrow f_{i}[\omega^{\gamma} + 1] \subset U_{i}$ homeomorphism such that $f_{i}(\omega^{\gamma}) = s(i)$.
	
	Fix a cofinal function $h: \omega \rightarrow \omega^{\gamma}$ and an open basis $\{W_{\rho} : \rho < \lambda < \mathfrak{b}\}$ for $x$. Let $p_{\rho} : \omega \rightarrow \omega$ be given by: 
	
	\[p_{\rho}(t) = \left\{ \begin{array}{ll}
		min\{i : \forall \delta \geq h(i) \, (f_{t}(\delta) \in W_{\rho})\},  & \mbox{if $s(t) \in W_{\rho}$}\\
		0,  & \mbox{otherwise.}
	\end{array} \right. \]
	
	Using the definition of $\mathfrak{b}$, we take $g:\omega\rightarrow\omega$ satisfying $\forall \rho < \lambda \, (p_{\rho} \leq^{*} g)$.
	
	\vspace*{0,25cm}
	
	The value $p_{\rho}(t)$ helps us understand how the open set $W_{\rho}$ behaves in relation with the homeomorphism $f_{t}$ using $h$. The function $g$ will help us to translate this behaviour to all the $W_{\rho}$'s.

	With this in mind, the desired copy of $\omega^{\beta_{1}} + 1$ (or $\omega^{\beta_{1}}$) in $V$ will be given by:
	
	\[f(\theta) = \left\{ \begin{array}{ll}
		f_{t}(h(g(t) + \theta')   & \mbox{if $\theta = \omega^{\gamma}.t + \theta'$ in the normal cantor form and $0 < \theta' \leq \omega^{\gamma}$} \\
		x  &  \mbox{if $\theta = \omega^{\gamma + 1}$}
	\end{array} \right. \]

	Now that $f$ is well defined and injective, we need only to prove that it is open and continuous. For the continuity, fix an open set $U$ in $X$. Without loss of generality we may assume $U \subset V$. Let $\lambda \in f^{-1}[U]$. If $\lambda \neq \omega^{\beta_{1}}$, we have that $\lambda = \omega^{\gamma}.t + \theta'$. Since $f(\lambda) \in U$ if and only if $f_{t}(h(g(t)) + \theta') \in U$ and  $f_{t}$ is continuous, we have the continuity of $f$ at $\lambda$. 
	If $\lambda = \omega^{\beta_{1}}$, then $f(\lambda) = x$. Consider $\rho$, such that $x \in W_{\rho} \subset U$. There must be $k \in \omega$ with the following property: $\forall i \geq k \,\, (g(i) \geq p_{\rho}(i))$. Furthermore, since $(\omega^{\gamma}.n)_{n \in \omega}$ converges to $\omega^{\beta_{1}}$, there must be an $l \in \omega$ with $\forall i \geq l \,\, f_{i}(\omega^{\gamma})=f(\omega^{\gamma}.i) \in W_{\rho}$. Taking $a = max\{k,l\}$ we have that, for all $i \geq a$, $f_{i}(\omega^{\gamma})=f(\omega^{\gamma}.i)  \in W_{\rho}$ and $g(i) \geq p_{\rho}$. Therefore, for all $\theta \geq \omega^{\gamma}.a + 1$, $f(\theta) \in W_{\rho} \subset U$. Hence, $(\omega^{\gamma}.a , \omega^{\beta_{1}}] \subset f^{-1}[U]$, and $\omega^{\beta_{1}}$ is an internal point of $f^{-1}[U]$.
	\vspace*{0,25cm}
	
	To verify that $f$ is open, since $f$ is injective, we only need to check open sets of the forms $[0,\delta)$ and $(\delta,\omega^{\beta_{1}}]$. Fix $\delta = \omega^{\gamma}.t + \theta'$. The following holds: 
	\[f[[0,\delta)] = \left[\left(\bigcup\left\{U_{i}: i \in t\right\}\right) \cup f_{t}\left[[1, \theta')\right] \right] \cap ran(f).\] 
	Since $f_{t}$ is a homeomorphism it follows that the set is open in $\ran{f}$. Now, to see that $f\left[(\delta, \omega^{\beta_{1}}]\right]$ is open, consider $f_{t}\left[(\theta' , \omega^{\gamma}]\right]$, the open sets $U_{i}$ for $i > t$, and an $W_{\rho}$ disjoint from the sets $\overline{U_{0}}, \cdots \overline{U_{t+1}}$. The verification is analogous to the ones before.
	
	\vspace*{0,25cm}
	
	We must also consider the case where $\beta_{1}$ is a limit ordinal. In this case, we change the convergent sequence $(\omega^{\gamma}.n : n \in \omega)$ for $(\omega^{\gamma_{n}} : n \in \omega)$, where $(\gamma_{n})_{n \in \omega}$ converges to $\beta_{1}$, which is possible since $\alpha$ is countable. The functions $f_{i}$ will represent copies of $\omega^{\gamma_{i}}+1$ in $U_{i}$, and the function $h$ will be changed to an $h_{n}$ for each corresponding $\omega^{\gamma_{n}}$. Changes should also be made to the $p_{\rho}$'s and  $f$ accordingly. The verification will be almost identical to the one before. 
	
\end{proof}

We present next the revisited proof of Theorem \ref{WKoriginal}, fixing the gap in the original, as we mentioned in the introduction.

\begin{thm}\label{main1}
	Let $X$ be a regular topological space with $X \rightarrow (top \, \omega+1)^{1}_{\omega}$, and $\chi(X) < \mathfrak{b}$. Then $X \rightarrow (top \, \alpha)^{1}_{\omega}$ for all $\alpha < \omega_{1}$.
\end{thm}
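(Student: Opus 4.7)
The plan is to reduce Theorem \ref{main1} to Lemma \ref{Alpha} by extracting, inside a single color class, a nonempty S-perfect subspace.

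Given a partition $f : X \to \omega$, for each color $n < \omega$ I would run the sequential Cantor--Bendixson process of Section \ref{CB} on the color class $X_n = f^{-1}(\{n\})$, but \emph{relative to} $X$: set $C^{n}_{0} = X_n$, at successor steps remove from $C^{n}_{\alpha}$ those points that are not limits of any injective sequence drawn from $C^{n}_{\alpha}$ (convergence computed in $X$), and take intersections at limits. The decreasing chain stabilizes at some stage $\alpha_n$ at a set $Y_n := C^{n}_{\alpha_n}$, which is by construction S-perfect as a subspace of $X$ and entirely contained in $X_n$.

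The key step is showing that some $Y_n$ is nonempty. Suppose, for contradiction, that every $Y_n = \emptyset$. Then each $x \in X$ has a well-defined relative rank $r(x)$, the unique $\alpha$ with $x \in C^{f(x)}_{\alpha} \setminus C^{f(x)}_{\alpha+1}$. Because only countable sequences appear in the process, a rank-$\alpha$ point can only arise when $\alpha$ has countable cofinality, so each $\alpha_n < \omega_1$ and the rank function takes values in a fixed countable ordinal $\rho = \sup_{n}\alpha_n$. The combined coloring $g(x) := (f(x), r(x)) \in \omega \times \rho$ is therefore an $\omega$-valued coloring, and $X \rightarrow (top\, \omega+1)^{1}_{\omega}$ applied to $g$ yields an injective sequence of constant color $(n,\alpha)$ converging to some point $x$ of the same color $(n,\alpha)$. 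But then we have an injective sequence lying in $C^{n}_{\alpha}$ converging to $x$ in $X$, forcing $x \in C^{n}_{\alpha+1}$ and contradicting $r(x) = \alpha$. This is the main obstacle of the proof, and also precisely where the original argument in \cite{WeissKomjath} missed a case: the classical Cantor--Bendixson process, insensitive to non-isolated convergent sequences, does not support an analogous monochromatic coloring, so the sequential version developed in Section \ref{CB} is essential here.

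With some $Y_n$ nonempty in hand, $Y_n$ is regular, S-perfect, and has character $\chi(Y_n) \leq \chi(X) < \mathfrak{b}$, so Lemma \ref{Alpha} applied to $Y_n$ with $V = Y_n$ itself produces, for every $\alpha < \omega_1$, a homeomorphism $\Phi : \alpha \to Y_n$. Since $Y_n \subseteq f^{-1}(\{n\})$, the image $\Phi[\alpha]$ is a monochromatic copy of $\alpha$, which establishes $X \rightarrow (top\, \alpha)^{1}_{\omega}$ as required.
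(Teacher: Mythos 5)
Your Case 1 (some $Y_n\neq\emptyset$) is exactly the paper's first case and is fine: a nonempty S-perfect subspace inside one color class, regularity and the character bound being hereditary, feeds directly into Lemma \ref{Alpha}. The gap is in your claim that some $Y_n$ must be nonempty. The contradiction you derive rests on the assertion that the rank function is bounded by a countable ordinal $\rho=\sup_n\alpha_n$, so that $g(x)=(f(x),r(x))$ is a countable coloring. Neither the assertion nor the justification (``a rank-$\alpha$ point can only arise when $\alpha$ has countable cofinality'') gives this: even if every attained rank had countable cofinality, the set of attained ranks could still be cofinal in $\omega_1$, making $\alpha_n=\omega_1$ and $g$ an uncountable coloring. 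A concrete counterexample to your claim is $X=\omega_1$ with the order topology and the constant coloring: it is regular, first countable (so $\chi(X)<\mathfrak{b}$), satisfies $X\rightarrow(top\,\omega+1)^1_\omega$ (one piece of any countable partition is stationary), yet the single color class is S-scattered with sequential height $\omega_1$ --- at every stage the minimum of what remains is removed, each level is nonstationary, and a countable union of nonstationary sets cannot cover $\omega_1$. So ``all $Y_n=\emptyset$'' is a genuinely occurring case, not a contradiction, and the conclusion of the theorem must be proved in it.

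This is precisely the paper's second case, and it requires a different argument. There one first observes (by the countable-additivity of the partition relation, as in your $g$-argument but applied only to the \emph{levels} $SI_\alpha(X_n)$, which carry no internal convergent sequences) that some $X_j$ must have sequential height at least $\omega_1$; then one runs the recursion of Lemma \ref{Alpha} again, but with the quantifier over points weakened: for each $\beta<\alpha$ one only demands the copies of $\beta$ to terminate at points of a sufficiently high level $(X_j)^{\lambda_\beta}_S$, and Proposition \ref{PropViz} supplies, at a point of level $\lambda_\alpha=\sup_\beta\lambda_\beta+1$, an injective convergent sequence lying in $(X_j)^{\lambda}_S$ whose terms are eligible as top points of the smaller copies. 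You need to add an argument of this kind (or otherwise handle the S-scattered case); as it stands the proof only covers Case 1.
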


\begin{proof}
	Let us consider a partition $X = \bigcup X_{n}$ given by a coloring $f \in \omega^{X}$. For each of the subspaces $X_{n}$, we take the Sequential Cantor-Bendixson decomposition. As in the original proof, we shall consider the following two cases. In the first one, there is an $n \in \omega$, such that $(X_{n})^{Sh(X_{n})}_{S}$ is not empty. In the second one, for all $n \in \omega$, $X_{n}$ is S-scattered. 
	
	\vspace*{0,25cm}
	
	For the first case, we fix $n \in \omega$ given by the hypothesis. Note that, since $(X_{n})^{Sh(X_{n})}_{S}$ is S-perfect, non empty, and the regularity and character are hereditary, we are in the condition of Lemma \ref{Alpha}. Therefore, the monochromatic copies of each $\alpha < \omega_{1}$ are given by the previous result.
	
	\vspace*{0,25cm}
	
	Consider now the second case. First, we note that there must be at least one $j \in \omega$, such that $Sh(X_{j}) \geq \omega_{1}$. Otherwise, we would have $Sh(X_{n})$ countable for all $n \in \omega$, and then $\{SI_{\alpha}(X_{n}) : n \in \omega \mbox{ and } \alpha < Sh(X_{n})\}$ would be a partition of $X$ that contradicts $X \rightarrow (top \, \omega + 1)^{1}_{\omega}$, since from the definition of $SI_{\alpha}(X_{n})$ there is no $ s:\omega \rightarrow SI_{\alpha}(X_{n})$ converging to a point in $SI_{\alpha}(X_{n})$. Fix one such $j \in \omega$. Our goal is to construct the copies of $\alpha$ using the levels of $X_{j}$ to help us.

	\vspace*{0,25cm}
	
	In the proof of Lemma \ref{Alpha}, the hypothesis worked for every point and neighbourhood. This construction, however, is not that simple. It is clear that no point in $SI_{0}(X_{j})$ could represent $\omega$ in a homeomorphism, since they cannot have a sequence in $X_{j}$ converging to them. Therefore, we have to carefully choose the points in each step of the recursive construction. With this in mind, we restate the hypothesis. Suppose that, for all $\beta < \alpha$, there is a $\lambda_{\beta} \leq \omega_{1}$ satisfying that for all $x \in (X_{j})^{\lambda_{\beta}}_{S}$ and open neighbourhood $V$ of $x$, there is an homeomorphism $f:\beta \rightarrow f[\beta] \subset V$ such that if $\beta = \gamma + 1$ and $\gamma$ is a limit ordinal, then $f(\gamma) = x$.

	\vspace*{0,25cm}
	
	We prove that there is an $\lambda_{\alpha} < \omega_{1}$ satisfying the condition in the statement above. Let $\lambda = sup\{\lambda_{\beta} : \beta < \alpha\}$. For all $\beta < \alpha$, we have $\lambda_{\beta} < \omega_{1}$ and $\alpha < \omega_{1}$; hence, $\lambda < \omega_{1}$. Let $\lambda_{\alpha} = \lambda + 1$. Fix $x \in (X_{j})^{\lambda_{\alpha}}_{S}$ and open neighbourhood of $x$ $V$. By Proposition \ref{PropViz}, there is a sequence $s: \omega \rightarrow X_{j}$ converging to $x$, with $s[\omega] \subset (X_{j})^{\lambda}_{S} \cap V$. Since $\lambda > \lambda_{\beta}$, the elements taken for the sequence have the same properties used in Lemma \ref{Alpha}. We can now just repeat the argument used before to obtain the homeomorphism and verify that $\lambda_{\alpha}$ is as desired.
\end{proof}

\section{The principle $\clubsuit_{F}$}\label{Comb}

In this section we define $\clubsuit_{F}$, a variation of the combinatorial principle $\clubsuit$. We will prove that $\diamondsuit^{*}$ implies $\clubsuit_{F}$ and that $\clubsuit_{F}$ is consistent with $\neg CH$.  For the later, we will use an iteration of Cohen forcing using the $CS^{*}$-iteration defined in \cite{fuchino}.
\begin{definition}
	If $X$ is a set of ordinals and  $\beta$ is an ordinal, then we say that $\beta \in \acc(X)$ if and only if $\beta = sup(X \cap \beta)$ and $\beta > 0 $. 
\end{definition}

\begin{definition}
	A sequence $\langle A_{\alpha}^{n} : n < \omega \wedge \alpha < \omega_{1} \rangle$ is called a $\clubsuit_{F}$-sequence if the following holds:
	\begin{itemize}
		\item[(1)] for all $\alpha \in acc(\omega_{1})$ and $n \in \omega$ we have that $A_{\alpha}^{n}$ is an unbounded subset of $\alpha$;
		\item[(2)] for all $f:\omega_{1} \rightarrow \omega$ there are $\alpha \in acc(\omega_{1})$ and $n, m \in \omega$ such that $\alpha \in f^{-1}[\{n\}]$ and $A_{\alpha}^{m} \subset f^{-1}[\{n\}]$.
	\end{itemize}
\end{definition}

\begin{definition}\label{diamondstar}
	We say that $\langle A_{\alpha}^{m} \mid \alpha < \omega_1 \wedge m < \omega \rangle $ is a $\diamondsuit^*$-sequence if and only if for every $X \subseteq \omega_1$ there exists a club $C \subseteq \omega_1$ such that, for all $\alpha \in C$, we have $X \cap \alpha = A_{\alpha}^{m}$, for some $m \in \omega$. 
\end{definition}

\begin{definition}
	Let $\vec{B}=\langle B_{\alpha}^{m} : \alpha < \omega_1 \wedge n < \omega \rangle$ be a $\diamondsuit^*$-sequence.  Consider $\vec{A}$ the sequence given by: $ A_{\alpha}^{n} = B_{\alpha}^{n}$ for all $n \in \omega$ and $\alpha \in \omega_{1}$ such that $B_{\alpha}^{n}$ is unbounded in $\alpha$, and $A_{\alpha}^{n} = \alpha$ otherwise. We say that $\vec{A}$ is the \emph{derived sequence} from $\vec{B}$. \end{definition}

In the next lemma we show an interesting property of a sequence derived from a $\diamondsuit^{*}$-sequence.

\begin{lemma}
	Let $\vec{B}$ be $\diamondsuit^{*}$-sequence. Then $\vec{A}$, the derived sequence from $\vec{B}$, is  a $\clubsuit_{F}$-sequence\footnote{Actually, $\clubsuit_{F}$ can be derived from $\diamondsuit$ but we will not need this fact.  Given $\langle A_{\alpha} \mid \alpha < \omega_1 \rangle$, a $\diamondsuit$-sequence, by \cite[Ch IV, Lemma 2.9 ]{Devlin}, it is possible to obtain a sequence $\langle A_{\alpha}^{m} \mid \alpha < \omega_1 \wedge m \in \omega \rangle$ such that for any $f:\omega_1 \rightarrow \omega$ there exists $n \in \omega$ such that $\langle A_{\alpha}^{m} \mid \alpha \in f^{-1}[\{n\}] \rangle$ is a $\diamondsuit(f^{-1}[\{n\}])$-sequence. Hence $\langle A_{\alpha}^{m} : \alpha < \omega_1 \wedge m \in \omega \rangle$ will be a $\clubsuit_{F}$-sequence. }.
\end{lemma}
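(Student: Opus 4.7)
The plan is to verify both clauses of the definition of a $\clubsuit_{F}$-sequence for $\vec{A}$. Clause~(1) is essentially by design: when $\alpha \in \acc(\omega_{1})$, the ordinal $\alpha$ is itself a countable limit ordinal and is therefore unbounded in itself, so in either case of the definition of $A_{\alpha}^{n}$ (whether $A_{\alpha}^{n} = B_{\alpha}^{n}$ because $B_{\alpha}^{n}$ is unbounded in $\alpha$, or $A_{\alpha}^{n} = \alpha$ otherwise) one obtains an unbounded subset of $\alpha$. All the substance lies in clause~(2).

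For clause~(2), fix $f: \omega_{1} \to \omega$ and write $X_{n} = f^{-1}[\{n\}]$. The first step is to locate a useful color. The set $L$ of countable limit ordinals is a club of $\omega_{1}$, and the partition $L = \bigcup_{n < \omega} (L \cap X_{n})$ is a decomposition of a stationary set into countably many pieces; by $\sigma$-completeness of the nonstationary ideal on $\omega_{1}$, some $L \cap X_{n}$ is stationary. Fix such an $n$. Since $X_{n}$ is then unbounded in $\omega_{1}$, a short standard argument shows that $\acc(X_{n})$ is a club of $\omega_{1}$; note moreover that $\acc(X_{n}) \subseteq L$, because a successor ordinal cannot equal $\sup(X_{n} \cap \alpha)$.

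Now I invoke $\diamondsuit^{*}$ on $X_{n}$: there is a club $C \subseteq \omega_{1}$ such that for every $\alpha \in C$ some $m \in \omega$ satisfies $B_{\alpha}^{m} = X_{n} \cap \alpha$. Intersecting the stationary set $L \cap X_{n}$ with the clubs $\acc(X_{n})$ and $C$ produces a stationary, hence non-empty, set. Any $\alpha$ in this intersection lies in $\acc(\omega_{1}) \cap f^{-1}[\{n\}]$, and because $\alpha \in \acc(X_{n})$ the set $B_{\alpha}^{m} = X_{n} \cap \alpha$ is unbounded in $\alpha$. By the very definition of the derived sequence one then has $A_{\alpha}^{m} = B_{\alpha}^{m} \subseteq f^{-1}[\{n\}]$, so $\alpha$, $n$, $m$ witness clause~(2).

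The main obstacle to be overcome is coordinating $\alpha \in X_{n}$ with $\alpha \in \acc(X_{n})$: were one only to know that $X_{n}$ was uncountable, $X_{n}$ could in principle be disjoint from $\acc(X_{n})$, as happens when a color class consists solely of successor ordinals. Restricting the pigeonhole to the club $L$ of limit ordinals before applying $\diamondsuit^{*}$, rather than applying it on all of $\omega_{1}$, is exactly what keeps the relevant intersection stationary, and this is the only non-routine step beyond invoking $\diamondsuit^{*}$ itself.
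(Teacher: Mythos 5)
Your proof is correct and follows essentially the same route as the paper's: pigeonhole onto a stationary colour class $f^{-1}[\{n\}]$, observe that its set of accumulation points is a club, apply $\diamondsuit^{*}$ to that class, and pick $\alpha$ in the resulting intersection so that $B_{\alpha}^{m}=f^{-1}[\{n\}]\cap\alpha$ is unbounded in $\alpha$ and hence equals $A_{\alpha}^{m}$. Your only (welcome) refinement is to intersect with the stationary set $f^{-1}[\{n\}]$ itself, which makes explicit that the chosen $\alpha$ satisfies $f(\alpha)=n$, a point the paper's proof leaves implicit.
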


\begin{proof}
	We shall prove that $\langle A_{\alpha}^{n} : n \in \omega \wedge \alpha \in \omega_{1} \rangle$ is a $\clubsuit_{F}$-sequence. Indeed, the condition (1) is immediate from our definition. To verify that the other condition is also satisfied let $f:\omega_{1} \rightarrow \omega$ be a function. There exists $n \in \omega$ such that $f^{-1}[\{n\}]$ is stationary and therefore $acc(f^{-1}[\{n\}])$ is a club. Applying the $\diamondsuit^{*}$ property for $f^{-1}[\{n\}]$ we have another club $\mathcal{C}$ such that, for all $\beta \in \mathcal{C}$, there exists $m \in \omega$ with $f^{-1}[\{n\}] \cap \beta = B_{\beta}^{m}$. Finally, for $\beta \in \mathcal{C} \cap acc(f^{-1}[\{n\}])$, $B_{\beta}^{m}$ satisfies $B_{\beta}^{m} = A_{\beta}^{m}$, verifying (2).
\end{proof}

The proof of the main theorem of this section, Theorem \ref{CohenCS}, is a variation of \cite[Theorem3.1]{chen} using the original notation from \cite{fuchino}. We use the Cohen $CS^{*}$-iteration which appears originally in \cite[$\S$ 5]{fuchino}. In what follows we shall define $CS^{*}$-iterations and state some of its properties.

\begin{definition}\cite[Section 4]{fuchino}
	We say that $\mathbb{P}_{\varepsilon} = \langle (\mathbb{P}_{\alpha}, \dot{\mathbb{Q}}_{\alpha}) : \alpha < \varepsilon\rangle$ is a $CS^{*}$-iteration if and only if $\langle (\mathbb{P}_{\alpha}, \dot{\mathbb{Q}}_{\alpha}) : \alpha < \varepsilon\rangle$ is a countable support iteration satisfying the following additional condition: if $\alpha \leq \varepsilon $ and $p < q$ in  $\mathbb{P}_{\alpha}$, then $\diff(p,q) \stackrel{\cdot}{=} \{\beta \in dom(p) \cap dom(q) : p\restriction_{\beta} \ \nVdash p(\beta) = q(\beta)\}$ is finite.
	Furthermore we say that $p \leq^{h}_{\mathbb{P}_{\alpha}} q$ if and only if $p \leq q$ and $p \restriction_{\dom{q}} = q$. We also say that $p \leq^{v}_{\mathbb{P}_{\alpha}} q$ if and only if $ p \leq q$ and $\dom{p}=\dom{q}$. When there is no risk of ambiguity we shall omit $\mathbb{P}_{\alpha}$ from $ \leq^{h}_{\mathbb{P}_{\alpha}}$ and $\leq^{v}_{\mathbb{P}_{\alpha}} $.
\end{definition}

\begin{definition}[Cohen $CS^*$ Iteration]\cite[Section 5]{fuchino} If $  \mathbb{P}_{\varepsilon} = \langle (\mathbb{P}_{\alpha}, \dot{\mathbb{Q}}_{\alpha}) : \alpha < \varepsilon\rangle$ is a $CS^{*}$-iteration such that for every $\alpha < \varepsilon$ we have $\1_{\mathbb{P}_{\alpha}} \forces \dot{\mathbb{Q}}_{\alpha} = Fn(\omega,2)$, then we call $\mathbb{P}_{\varepsilon}$ a Cohen $CS^{*}$-iteration of length $\varepsilon$.
\end{definition}

The next Theorem is the main result of this section. Before proving it, we will derive Corollary \ref{Cor2} and state a few preparatory lemmas that we will need for the proof of Theorem \ref{CohenCS}.

\begin{thm}\label{CohenCS} Let $\mathbb{P}_{\kappa}$ be a Cohen $CS^*$-iteration of length $\kappa$, where $\kappa$ is a regular cardinal $\geq \aleph_2$ such that for every $\alpha < \kappa$ we have $\alpha^{\omega}<\kappa$. Suppose $\vec{B}$ is a $\diamondsuit^{*}$-sequence and  $ \vec{A}=\langle A_{\alpha}^{m} : \alpha < \omega_1 \wedge n \in \omega \rangle $ is the sequence derived from $\vec{B}$. If  $ p \in \mathbb{P}_{\kappa}$, $\sigma$ is a $\mathbb{P}_{\kappa}$-name and $p\forces \sigma:\check{\omega}_{1} \rightarrow \omega$, then there is a condition $p_{*} < p$,  an ordinal $\eta < \omega_1$ and $n,m \in \omega $ such that $$p_* \forces (  \check{A}_{\eta}^{m} \subseteq \sigma^{-1}[\{\check{n}\}] \wedge \sigma(\check{\eta})=\check{n}).$$ In particular  $p_{*} \forces ``\sigma \mbox{ is not a bijection}"$.
\end{thm}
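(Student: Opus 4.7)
The plan is to adapt the preservation argument of \cite[Theorem 3.1]{chen} from the $\diamondsuit^{*}$ setting to the (weaker) $\clubsuit_{F}$ setting: use the Cohen $CS^{*}$-fusion to construct a ground-model ``shadow'' of $\sigma$, and then invoke the $\diamondsuit^{*}$-property of $\vec{B}$ in $V$ to pin down one value of the shadow by an unbounded $B_{\eta}^{m}$. Once that matching is in place, the target $A_{\eta}^{m}$ is forced into $\sigma^{-1}[\{\check n\}]$ essentially for free.

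Fix a regular cardinal $\theta$ large enough so that $\mathbb{P}_{\kappa}, p, \sigma, \vec{A}, \vec{B} \in H(\theta)$, and form a continuous $\in$-increasing chain $\langle M_{\alpha} : \alpha < \omega_{1} \rangle$ of countable elementary submodels of $H(\theta)$, each containing those parameters. Write $\eta_{\alpha} = M_{\alpha} \cap \omega_{1}$; then $C = \{\eta_{\alpha} : \alpha < \omega_{1}\} \in V$ is a club in $\omega_{1}$. Using the $\leq^{h}/\leq^{v}$-machinery of \cite[\S\S 4--5]{fuchino}, I then build a fusion-like sequence $\langle q_{\alpha} : \alpha < \omega_{1}\rangle$ below $p$, with each $q_{\alpha}$ being $(M_{\alpha}, \mathbb{P}_{\kappa})$-generic and $q_{\beta} \leq^{h} q_{\alpha}$ for $\alpha \leq \beta$; the genericity forces $\sigma \restriction \check{\eta}_{\alpha}$ to equal some ground-model function $h_{\alpha} : \eta_{\alpha} \to \omega$, and the $\leq^{h}$-coherence gives $h_{\alpha} \subseteq h_{\beta}$. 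Set $h := \bigcup_{\alpha} h_{\alpha} \in V$, a function $h : \omega_{1} \to \omega$.

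Now, working in $V$, pick by pigeonhole $n \in \omega$ such that $S := h^{-1}[\{n\}]$ is stationary in $\omega_{1}$, and apply the $\diamondsuit^{*}$-property of $\vec{B}$ to $S$: there is a club $D \subseteq \omega_{1}$ such that for every $\beta \in D$ some $B_{\beta}^{m(\beta)}$ equals $S \cap \beta$. By Fodor, on a stationary $S' \subseteq S$ every $\beta \in S'$ lies in $\acc(S)$, so $S \cap \beta$ is unbounded in $\beta$. Pick $\beta \in C \cap D \cap S'$; write $\beta = \eta_{\alpha}$ and $m = m(\beta)$. Then $B_{\beta}^{m} = S \cap \beta$ is unbounded in $\beta$, so by the definition of the derived sequence $A_{\beta}^{m} = B_{\beta}^{m} \subseteq S$. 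Finally set $p_{*} := q_{\alpha+1}$: since $\beta = \eta_{\alpha} < \eta_{\alpha+1}$ and $q_{\alpha+1} \forces \sigma \restriction \check{\eta}_{\alpha+1} = \check{h} \restriction \check{\eta}_{\alpha+1}$, we get $p_{*} \forces \sigma(\check{\beta}) = \check{n}$ (because $h(\beta) = n$) and $p_{*} \forces \check{A}_{\beta}^{m} \subseteq \sigma^{-1}[\{\check{n}\}]$ (because $A_{\beta}^{m} \subseteq S \cap \eta_{\alpha+1}$ and $\sigma$ agrees with $h$ there), as required.

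The main obstacle is the construction of the length-$\omega_{1}$ fusion $\langle q_{\alpha}\rangle$ with coherent ground-model decisions of $\sigma$: in an ordinary countable-support iteration such a fusion is not available, and it is precisely the separation of $\leq^{h}$ from $\leq^{v}$ in the Cohen $CS^{*}$-framework of \cite{fuchino} that makes it possible. One must verify that $\leq^{h}$-decreasing sequences admit lower bounds inside $\mathbb{P}_{\kappa}$ at limit stages of countable cofinality, and that $(M_{\alpha}, \mathbb{P}_{\kappa})$-genericity of $q_{\alpha}$ really does condense $\sigma \restriction \check{\eta}_{\alpha}$ into a single ground-model function $h_{\alpha}$; the hypothesis $\alpha^{\omega} < \kappa$ for $\alpha < \kappa$ is used here to control the cardinality of the relevant nice names. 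The remaining ingredients (pigeonhole, Fodor, and the stationary intersection $C \cap D \cap S'$) are routine.
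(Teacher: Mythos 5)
Your overall architecture matches the paper's endgame: build an $\omega_{1}$-sequence of conditions deciding values of $\sigma$, extract a stationary set $T$ on which the decided value is constantly $n^{*}$, apply $\diamondsuit^{*}$ to $T$, and pick $\eta\in\acc(T)$ so that $B_{\eta}^{m}=T\cap\eta$ is unbounded and hence equals $A_{\eta}^{m}$. That part of your proposal is essentially the paper's argument. But the step that produces the ground-model ``shadow'' $h$ contains a genuine gap: you assert that an $(M_{\alpha},\mathbb{P}_{\kappa})$-generic condition $q_{\alpha}$ forces $\sigma\restriction\check{\eta}_{\alpha}$ to equal a single ground-model function $h_{\alpha}$. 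This is false. The iteration adds Cohen reals, so $\sigma\restriction\eta_{\alpha}$ (a countable function into $\omega$) can code a new real --- e.g.\ $\sigma(k)=c(k)$ for $k<\omega$ where $c$ is the first Cohen real --- and then \emph{no} condition whatsoever decides all of $\sigma\restriction\eta_{\alpha}$; $(M,\mathbb{P})$-genericity only forces the generic to meet the dense sets of $M$, it does not decide values. Moreover, even the weaker plan of deciding one value $\sigma(\alpha)$ per step cannot be folded into the fusion the way you suggest: each decision requires a \emph{vertical} extension, while the lower-bound lemma (\cite[Lemma 4.3]{fuchino}) applies only to $\leq^{h}$-decreasing sequences, so the deciding conditions themselves cannot form the fusion spine.

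The paper's proof resolves exactly this point, and it is the heart of the argument rather than a routine verification. For each $\alpha$ one takes $w_{\alpha}\leq r_{\alpha}$ deciding only $\sigma(\alpha)=\check{n}_{\alpha}$, then uses \cite[Lemma 5.1]{fuchino} to replace it by $q_{\alpha}$ whose disagreement with the spine is confined to a \emph{finite} set $u_{\alpha}$ of coordinates on which $q_{\alpha}$ takes \emph{checked} values in $Fn(\omega,2)$; the spine $p_{\alpha}$ absorbs only the new coordinates horizontally, so $\langle p_{\alpha}\rangle$ stays $\leq^{h}$-decreasing and has lower bounds at limits. A Fodor/$\Delta$-system argument then stabilizes the finite sets $u_{\alpha}$ and the finitely many checked values on a stationary set $S$, which is what makes a single later condition $q_{\xi}$ satisfy $q_{\xi}\leq^{h}q_{\alpha}$ for all $\alpha\in S\cap\xi$ and hence force $\sigma(\alpha)=n_{\alpha}$ simultaneously for all such $\alpha$. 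Your proposal replaces this finite-difference bookkeeping by an unjustified condensation claim, so as written the proof does not go through. (A small additional slip: extracting $S'\subseteq S$ with $S'\subseteq\acc(S)$ is not an application of Fodor; it is just the fact that $\acc(S)$ is a club.)
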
 

\begin{cor}\label{Cor2} 
	Assume $\diamondsuit^{*}$ and that $\kappa$ is a regular cardinal $\geq \aleph_2$ such that for all $\alpha < \kappa$ we have $\alpha^{\omega}<\kappa$. Let $\vec{A}$ be the sequence derived from a $\diamondsuit^*$-sequence $\vec{B}$. Then $$\forces_{\mathbb{P}_{\kappa}} \vec{A} \text{ is a } \clubsuit_{F} \text{-sequence}.$$
\end{cor}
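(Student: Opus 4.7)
The plan is to derive the corollary from Theorem \ref{CohenCS} by a standard genericity argument, once we observe that the $CS^*$-iteration machinery preserves $\aleph_1$ (this is one of the main features of the $CS^*$-framework of \cite{fuchino}).

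Condition (1) of the $\clubsuit_F$-definition already holds in the ground model and transfers by absoluteness. For every $\alpha \in \acc(\omega_1)$ and $n < \omega$, the derived sequence is constructed so that $A_\alpha^n$ is either $B_\alpha^n$ (which is unbounded in $\alpha$ by the very definition of the derived sequence) or $\alpha$ itself, which is unbounded in $\alpha$ because $\alpha$ is a limit ordinal. Since $\aleph_1$ is preserved, $\acc(\omega_1)^V = \acc(\omega_1)^{V[G]}$; and unboundedness of a subset of a fixed countable ordinal is absolute, so clause (1) persists in $V[G]$.

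For condition (2), I would fix a $\mathbb{P}_\kappa$-name $\dot f$ for a function from $\omega_1$ to $\omega$ and an arbitrary condition $p \in \mathbb{P}_\kappa$. Theorem \ref{CohenCS} applied to $p$ and $\dot f$ delivers $p_* \leq p$ together with $\eta < \omega_1$ and $n, m < \omega$ with
\[
p_* \Vdash \check A_\eta^m \subseteq \dot f^{-1}[\{\check n\}] \wedge \dot f(\check\eta) = \check n.
\]
Thus the set of conditions forcing the existence of such witnesses is dense below every $p$, so in any generic $G$ the function $f$ interpreting $\dot f$ admits the required $\eta, n, m$ in $V[G]$, which is exactly clause (2).

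The main conceptual work is contained in Theorem \ref{CohenCS}; the corollary is essentially its generic-extension reformulation. The only bookkeeping point I anticipate is that clause (2) of the $\clubsuit_F$-definition insists on $\eta \in \acc(\omega_1)$, while the theorem only states $\eta < \omega_1$. I expect the proof of the theorem to build $\eta$ as the supremum of a countable chain of ordinals produced by a fusion-style construction, so that $\eta$ is automatically a limit; alternatively, should $\eta$ be a successor the clause $\check A_\eta^m \subseteq \dot f^{-1}[\{\check n\}]$ together with the definition $A_\eta^m = \eta$ forces $\dot f$ to be constantly $\check n$ on all of $\eta$, and then any nonzero limit ordinal below $\eta$ works as a legitimate $\clubsuit_F$-witness, so no loss of generality results.
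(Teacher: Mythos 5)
Your proof is correct and follows essentially the same route as the paper: the paper's own proof of Corollary \ref{Cor2} is exactly the density argument you give for clause (2), namely applying Theorem \ref{CohenCS} below an arbitrary condition extending $q$. You are in fact slightly more careful than the paper on two points it leaves implicit --- the persistence of clause (1) and the requirement that the witness lie in $\acc(\omega_1)$, which, as you anticipated, holds because in the proof of Theorem \ref{CohenCS} the ordinal $\eta$ is chosen in $C \cap T \cap \acc(T)$ and is therefore a limit ordinal.
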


\begin{proof}
	Let $ \sigma $ be a $\mathbb{P_{\kappa}}$-name and $q \in \mathbb{P}$ such that $q \forces \sigma:\omega_1 \rightarrow \omega$. Fix $p \in \mathbb{P}$ such that $p \leq q $. Applying Theorem \ref{CohenCS} to $p$ and $\sigma$ it follows that there exists $p_* \leq p$, $\eta < \omega_1$ and $n,m \in \omega$ such that 
	$$p_* \forces \check{A}_{\eta}^{m} \subseteq \sigma^{-1}[\{\check{n}\}] \wedge \sigma(\check{\eta})=\check{n}.$$
	Therefore $$\forces_{\mathbb{P}_{\kappa}} \vec{A} \text{ is a } \clubsuit_{F} \text{-sequence}.$$
\end{proof}

Now we focus on what is needed to prove Theorem \ref{CohenCS}. From here until the end of this Section the forcing $\mathbb{P}_{\alpha}$ will denote a Cohen $CS^{*}$-iteration of length $\alpha$ for any ordinal $\alpha$. The following results are central to our proof. 

\begin{lemma}\cite[Lemma 4.3]{fuchino}
	Let $\gamma$ be an ordinal, and suppose that $\langle p_{n} : n \in \omega \rangle $ is a sequence of elements of $\mathbb{P}_{\gamma}$ such that $m < l < \omega $ implies $p_{l} \leq^{h} p_{n}$. Then $r= \bigcup_{n \in \omega} p_{n} \in \mathbb{P}_{\gamma}$. 
\end{lemma}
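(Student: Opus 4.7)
My plan is to proceed by induction on $\gamma$, exploiting the observation that the hypothesis $p_l \leq^h p_m$ (for $m \leq l$) says exactly that $\dom{p_m} \subseteq \dom{p_l}$ and $p_l\restriction_{\dom{p_m}} = p_m$ \emph{as functions}. Consequently the $p_n$'s are pairwise compatible as partial functions, the union $r := \bigcup_{n\in\omega} p_n$ is well-defined, $\dom{r} = \bigcup_n \dom{p_n}$ is countable (a countable union of countable sets), and for each $\beta \in \dom{r}$, writing $N_\beta := \min\{n : \beta \in \dom{p_n}\}$, we have $r(\beta) = p_{N_\beta}(\beta)$. Before starting the induction proper, I would record the routine fact that $\leq^h$ is preserved by restriction to any $\alpha \leq \gamma$, so that $\langle p_n\restriction_\alpha : n \in \omega \rangle$ again satisfies the hypothesis in $\mathbb{P}_\alpha$; this lets the inductive hypothesis furnish $r\restriction_\alpha \in \mathbb{P}_\alpha$ for every $\alpha < \gamma$.

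The case $\gamma = 0$ is trivial. For the successor step $\gamma = \delta + 1$, we already have $r\restriction_\delta \in \mathbb{P}_\delta$; if $\delta \notin \dom{r}$ then $r = r\restriction_\delta$ and we are done. Otherwise set $N = N_\delta$, so $r(\delta) = p_N(\delta)$. Because $r$ agrees with $p_N$ on $\dom{p_N}$, one checks directly that $r\restriction_\delta \leq p_N\restriction_\delta$ in $\mathbb{P}_\delta$, so the existing forcing statement $p_N\restriction_\delta \forces p_N(\delta) \in \dot{\mathbb{Q}}_\delta$ transfers to $r\restriction_\delta \forces r(\delta) \in \dot{\mathbb{Q}}_\delta$, placing $r$ in $\mathbb{P}_{\delta+1}$. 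At a limit $\gamma$, the claim is simply the collation of what the induction has already given: $\dom{r}\subseteq \gamma$ is countable, $r\restriction_\alpha \in \mathbb{P}_\alpha$ for every $\alpha<\gamma$, and these are the defining conditions for membership in $\mathbb{P}_\gamma$.

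The only subtle point is to check that the $CS^*$ side-condition (finiteness of $\diff$ on comparable pairs) is not violated by passing to the union. This is where I expect the bulk of the care to go, but it is reassuring that for each $n$ we have $r \leq^h p_n$ by construction, which yields $\diff(r,p_n) = \emptyset$; hence the union sits in the iteration without introducing any infinite-diff pair with the $p_n$'s themselves. The main obstacle, then, is mostly bureaucratic: making sure that the successor-step transfer of the forcing statement from $p_N\restriction_\delta$ to $r\restriction_\delta$ is legitimate under the specific order used in a $CS^*$-iteration, and that no hidden closure clause in Fuchino's limit-step definition of $\mathbb{P}_\gamma$ requires additional verification beyond what the induction supplies.
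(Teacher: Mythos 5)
The paper does not prove this lemma at all---it is quoted verbatim from Fuchino--Shelah--Soukup \cite[Lemma 4.3]{fuchino} and used as a black box---so there is no in-paper argument to compare yours against. That said, your reconstruction is correct and is essentially the standard one. The key observation is exactly the one you lead with: $p_l \leq^{h} p_m$ forces literal agreement of names on $\dom{p_m}$, so $r$ is a well-defined function with countable domain, and for every $n$ one has $r\restriction_{\dom{p_n}} = p_n$ on the nose. The induction then goes through: at a successor $\delta+1$, the coordinate requirement $r\restriction_{\delta} \forces r(\delta) \in \dot{\mathbb{Q}}_{\delta}$ transfers from $p_{N}\restriction_{\delta}$ because $r(\delta)$ and $p_N(\delta)$ are the \emph{same} name and $r\restriction_{\delta} \leq^{h} p_N\restriction_{\delta}$ (the relevant $\diff$ is empty, so the $CS^*$ restriction on the order is vacuous here); at limits, membership in a countable-support iteration is exactly coherence of the restrictions plus countability of the support, which the induction supplies. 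Two small remarks. First, your worry about the finiteness-of-$\diff$ clause is a non-issue for the statement as given: in the $CS^*$ setup that clause constrains the \emph{ordering}, not membership in $\mathbb{P}_{\gamma}$, so proving $r \in \mathbb{P}_{\gamma}$ requires nothing about $\diff$; it only becomes relevant for the follow-up corollary that $r$ is a greatest lower bound, where $\diff(r,p_n)=\emptyset$ settles it. Second, the phrase ``$r \leq^{h} p_n$ by construction'' is slightly circular as written (one needs $r \in \mathbb{P}_{\gamma}$ first), but the underlying fact you are using---literal agreement of the names---is available before that and is all the argument needs.
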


\begin{cor}
	Let $\alpha < \omega_1$, and let $\langle p_{\beta} : \beta < \alpha \rangle $ be a sequence of elements of $\mathbb{P}_{\kappa}$ such that $\beta < \gamma < \alpha$ implies $p_{\beta} \leq^{h} p_{\gamma} $. Then $\bigcup_{\beta} p_{\beta} $ is the greatest lower bound of the sequence $\langle p_{\beta} : \beta < \alpha \rangle $. \end{cor}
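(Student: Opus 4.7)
The plan is to reduce this to the countable-cofinality case already handled by Lemma 4.3, exploiting that $\alpha < \omega_1$ is necessarily countable. The successor case is immediate: for $\alpha = \beta + 1$, the greatest lower bound of $\langle p_\gamma : \gamma < \beta + 1\rangle$ is just $p_\beta$, and the union $\bigcup_{\gamma \leq \beta} p_\gamma$ collapses to $p_\beta$, because under $\leq^{h}$ each earlier term is, as a partial function, already contained in the later one. So only the limit case requires work.

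Assume then that $\alpha$ is a countable limit ordinal and fix a strictly increasing cofinal sequence $\langle \alpha_n : n < \omega \rangle$ in $\alpha$. The subsequence $\langle p_{\alpha_n} : n < \omega \rangle$ inherits the $\leq^{h}$-chain property, so Lemma 4.3 applies directly and yields $r := \bigcup_{n < \omega} p_{\alpha_n} \in \mathbb{P}_\kappa$. I would then verify the set-theoretic identity $r = \bigcup_{\beta < \alpha} p_\beta$: for any $\beta < \alpha$ pick $n$ with $\alpha_n > \beta$, so that $p_\beta$ sits inside $p_{\alpha_n}$ as a partial function (this is precisely the content of the $\leq^{h}$ relation between them), hence inside $r$; the reverse inclusion is trivial since the $\alpha_n$ are themselves indices below $\alpha$.

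It remains to verify that $r$ is the greatest lower bound. By construction $r \restriction_{\dom{p_\beta}} = p_\beta$ for each $\beta < \alpha$, so $r \leq^{h} p_\beta$, and in particular $r$ is a lower bound of the sequence. If $q$ is any other lower bound, then $\dom{q} \supseteq \dom{p_\beta}$ for every $\beta$, hence $\dom{q} \supseteq \dom{r}$; for each $\gamma \in \dom{r}$, pick $\beta$ with $\gamma \in \dom{p_\beta}$ and read off from $q \leq p_\beta$ that $q \restriction_\gamma \forces q(\gamma) \leq p_\beta(\gamma) = r(\gamma)$, which gives $q \leq r$ coordinate by coordinate in the iteration.

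The main obstacle is really bookkeeping rather than anything deeper: confirming that the coherence of the $\leq^{h}$-chain passes to the cofinal $\omega$-subsequence, so that Lemma 4.3 can be invoked, and that the union along that subsequence actually captures the full union over all of $\alpha$ (so that the GLB verification is not limited to the subsequence but applies to the full family). No combinatorial input beyond Lemma 4.3 and the definition of $\leq^{h}$ should be required.
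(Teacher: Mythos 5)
The paper states this corollary without proof, so there is no argument of the authors' to measure yours against; I am judging the proposal on its own terms. Your overall route is the natural one: the successor case is trivial, and in the limit case passing to a cofinal $\omega$-subsequence, applying the preceding lemma (\cite[Lemma 4.3]{fuchino}) to obtain $r=\bigcup_{n}p_{\alpha_{n}}\in\mathbb{P}_{\kappa}$, and checking that this union coincides with $\bigcup_{\beta<\alpha}p_{\beta}$ is exactly how one sees that the union is a condition. The verification that $r$ is a lower bound is also correct: $r\restriction_{\dom{p_{\beta}}}=p_{\beta}$, so $\diff(r,p_{\beta})=\emptyset$ and $r\leq^{h}p_{\beta}$. (You silently read the hypothesis as $p_{\gamma}\leq^{h}p_{\beta}$ for $\beta<\gamma$, i.e.\ later conditions extend earlier ones; that is the reading consistent with Lemma 4.3 and with how the corollary is used in the proof of Theorem \ref{CohenCS}, so this is the right choice, but it is worth flagging the inequality in the statement as a typo.)

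The gap is in the final step, where you conclude $q\leq r$ ``coordinate by coordinate in the iteration.'' In a $CS^{*}$-iteration the order is not purely coordinatewise: $q\leq r$ also demands that $\diff(q,r)$ be finite. Since $r(\gamma)=p_{\beta}(\gamma)$ for every $\gamma\in\dom{p_{\beta}}$, one has $\diff(q,r)=\bigcup_{\beta<\alpha}\diff(q,p_{\beta})$, an increasing union of finite sets along a chain of cofinality $\omega$, and nothing in the hypotheses bounds its size. Indeed, one can in general produce a lower bound $q$ with $\dom{q}=\dom{r}$ that properly strengthens $r$ at exactly one new coordinate from each $\dom{p_{\alpha_{n+1}}}\setminus\dom{p_{\alpha_{n}}}$: then each $\diff(q,p_{\alpha_{n}})$ is finite, so $q\leq p_{\alpha_{n}}$ for all $n$, yet $\diff(q,r)$ is infinite and $q\not\leq r$. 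So the ``greatest lower bound'' claim, read literally with respect to $\leq$, does not follow from your argument and appears not to be true as stated. What your argument does establish --- and what is actually used later, in the construction of $r_{\alpha}$ in the proof of Theorem \ref{CohenCS} --- is that $r$ belongs to $\mathbb{P}_{\kappa}$ and is a lower bound of the chain; moreover it is the greatest among $\leq^{h}$-lower bounds, since any $q$ with $q\leq^{h}p_{\beta}$ for all $\beta$ satisfies $q\restriction_{\dom{r}}=r$ and hence $q\leq^{h}r$. Either restrict the conclusion to that weaker statement, or supply an argument for the finiteness of $\diff(q,r)$; as it stands the last sentence of your proof is unjustified.
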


\begin{lemma}\cite[Lemma 5.1]{fuchino}\label{decide}
	Given an ordinal $\theta$ and $p, q \in \mathbb{P}_{\theta}$, if $p \leq q$, then there is $p' \leq p$ such that, for all $\alpha \in \diff(p',q)$, $p'\restriction_{\alpha}$ decides the value of $p'(\alpha)$.
\end{lemma}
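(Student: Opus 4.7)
The plan is to prove the statement by transfinite induction on the length $\theta$ of the iteration. The base case $\theta=0$ is vacuous. The two nontrivial cases are successor and limit, and both exploit two facts: first, $\dot{\mathbb{Q}}_{\alpha}$ is forced to be the ground-model countable set $\mathrm{Fn}(\omega,2)$, so whenever $p(\alpha)$ is a $\mathbb{P}_{\alpha}$-name for an element of $\dot{\mathbb{Q}}_{\alpha}$, the collection of conditions in $\mathbb{P}_{\alpha}$ that force $p(\alpha)=\check{s}$ for some $s\in \mathrm{Fn}(\omega,2)$ is dense below $p\restriction_{\alpha}$; second, the $CS^*$-iteration structure guarantees that $\diff(p,q)$ is finite, which will be essential at limit stages.

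At a successor stage $\theta=\beta+1$, I would split on whether $\beta\in\diff(p,q)$. If $\beta\notin\diff(p,q)$, I apply the inductive hypothesis in $\mathbb{P}_{\beta}$ to $(p\restriction_{\beta},q\restriction_{\beta})$ to obtain $p''\leq p\restriction_{\beta}$ with the decision property, and take $p'$ to be the condition with $p'\restriction_{\beta}=p''$ and $p'(\beta)=p(\beta)$; the coordinate $\beta$ does not enter $\diff(p',q)$ because the forcing equality $p\restriction_{\beta}\forces p(\beta)=q(\beta)$ is inherited by the stronger $p''$. If $\beta\in\diff(p,q)$, I would apply the inductive hypothesis twice around a strengthening step: first obtain $p_0\leq p\restriction_{\beta}$ from the inductive hypothesis; then by density find $r\leq p_0$ and $s\in \mathrm{Fn}(\omega,2)$ with $r\forces p(\beta)=\check{s}$; finally apply the inductive hypothesis in $\mathbb{P}_{\beta}$ to $(r,q\restriction_{\beta})$ to get $p_1\leq r$ deciding on $\diff(p_1,q\restriction_{\beta})$. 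Setting $p'\restriction_{\beta}=p_1$ and $p'(\beta)=\check{s}$ yields the desired condition, since $p'\restriction_{\beta}=p_1\leq r$ forces $p'(\beta)=\check{s}$, and the inductive hypothesis handles all $\gamma<\beta$ in $\diff(p',q)$.

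At a limit stage $\theta$, the finiteness of $\diff(p,q)$ from the $CS^*$-structure is essential: I set $\alpha=\max(\diff(p,q))+1$, or $\alpha=0$ if $\diff(p,q)=\emptyset$, so that $\alpha<\theta$. Applying the inductive hypothesis in $\mathbb{P}_{\alpha}$ to $(p\restriction_{\alpha},q\restriction_{\alpha})$ produces $p''\leq p\restriction_{\alpha}$ with the decision property, and I then take $p'$ with $p'\restriction_{\alpha}=p''$ and $p'\restriction_{[\alpha,\theta)}=p\restriction_{[\alpha,\theta)}$. No coordinate $\gamma\in[\alpha,\theta)$ enters $\diff(p',q)$: from $\gamma\notin\diff(p,q)$ one has $p\restriction_{\gamma}\forces p(\gamma)=q(\gamma)$, this is inherited by the stronger $p'\restriction_{\gamma}$, and $p'(\gamma)=p(\gamma)$ by construction, so $p'\restriction_{\gamma}\forces p'(\gamma)=q(\gamma)$.

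The main obstacle is the double application of the inductive hypothesis in the harder successor sub-case: one must verify that strengthening $p_0$ to $r$ in order to decide $p(\beta)$ does not block the decision property at coordinates below $\beta$, since $\diff(r,q\restriction_{\beta})$ may be strictly larger than $\diff(p_0,q\restriction_{\beta})$. The point is that the inductive hypothesis applies verbatim to the pair $(r,q\restriction_{\beta})$, because the induction is on the length of the iteration and not on the size of $\diff$, so the second application simply absorbs any newly introduced differences below $\beta$ in a single stroke.
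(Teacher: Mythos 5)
The paper does not actually prove this lemma: it is quoted verbatim from Fuchino--Shelah--Soukup \cite[Lemma 5.1]{fuchino} and used as a black box, so there is no in-paper argument to compare yours against. Judged on its own, your induction on $\theta$ is sound. You correctly identify the one real danger --- that strengthening a condition below a coordinate can enlarge $\diff(\cdot,q)$ and thereby create new coordinates at which a decision is owed --- and you neutralize it in the right way: in the successor case by making the application of the inductive hypothesis to $(r,q\restriction_{\beta})$ the \emph{last} step, so that it absorbs whatever new differences the density argument introduced below $\beta$; and in the limit case by using finiteness of $\diff(p,q)$ to push everything below some $\alpha<\theta$ and checking that coordinates in $[\alpha,\theta)$, where $p'$ copies $p$, cannot enter $\diff(p',q)$ because the forced equality $p\restriction_{\gamma}\forces p(\gamma)=q(\gamma)$ persists under strengthening. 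Two small remarks: the first application of the inductive hypothesis in the sub-case $\beta\in\diff(p,q)$ (producing $p_0$) does no work and can be deleted, since the final application to $(r,q\restriction_{\beta})$ already handles all coordinates below $\beta$; and you leave implicit the routine verifications that the assembled $p'$ is a legitimate condition of the $CS^{*}$-iteration and that $\diff(p',p)$ is finite (so that $p'\leq p$ in the restricted ordering) --- both follow because $p'$ differs from $p$ at most at the single coordinate $\beta$ plus the finitely many coordinates where the inductively obtained condition differs from $p\restriction_{\alpha}$, together with the case $\gamma\notin\dom{p}\cap\dom{q}$ in your limit-stage argument, which is vacuous since $\dom{p'}\setminus\alpha=\dom{p}\setminus\alpha$. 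These are bookkeeping points, not gaps.
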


\begin{lemma}\label{rename}
	Let $\theta$ be an ordinal, $p \in \mathbb{P}_{\theta}$, and $\langle \sigma_{\alpha} : \alpha \in \dom{p}  \rangle$ be a sequence of $\stackrel{\cdot}{\mathbb{Q}}_{\alpha}$-names. If for all $\alpha \in \dom{p}$ we have $$p \restriction_{\alpha} \forces p(\alpha) = \sigma_{\alpha},$$ then for any formula $\varphi(x)$ and $\mathbb{P}_{\kappa}$-name $\tau$ we have: $$p \forces \varphi(\tau)$$ if and only if 
	$$ p' \stackrel{\cdot}{=} \langle (\alpha,\sigma_{\alpha}) : \alpha \in \dom{p} \rangle  \forces \varphi(\tau).$$
\end{lemma}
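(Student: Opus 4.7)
The plan is to show that $p$ and $p'$ are forcing equivalent in the strong sense that every generic filter contains $p$ iff it contains $p'$. Once this is established, the biconditional $p \forces \varphi(\tau) \iff p' \forces \varphi(\tau)$ follows immediately: for any $\mathbb{P}_\kappa$-generic $G$ containing one of the two conditions, $G$ also contains the other, so $V[G] \models \varphi(\tau^{G})$ holds or fails under exactly the same generics. Thus the entire content of the lemma reduces to a claim about the shape of the conditions, independent of $\varphi$ and $\tau$.

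I would establish the forcing equivalence by induction on $\alpha \leq \theta$, simultaneously maintaining two statements: (i) $p'\restriction_{\alpha} \in \mathbb{P}_{\alpha}$, and (ii) for every $\mathbb{P}_{\alpha}$-generic filter $G_{\alpha}$ over $V$, $p\restriction_{\alpha} \in G_{\alpha}$ iff $p'\restriction_{\alpha} \in G_{\alpha}$. The base case $\alpha = 0$ is immediate. At a limit $\alpha$, statement (i) applied at every smaller ordinal gives all compatibility conditions needed for $p'\restriction_{\alpha}$ to be a legitimate condition of countable support (the domain is a subset of $\dom{p}$, hence countable), and (ii) passes through because membership in $G_{\alpha}$ is determined coordinatewise via the induced generics $G_{\beta}$ for $\beta < \alpha$. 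The successor case $\alpha = \beta + 1$ with $\beta \in \dom{p}$ carries the real content. Decomposing $G_{\alpha}$ as the pair $(G_{\beta}, H)$, with $H$ the induced generic filter on $\dot{\mathbb{Q}}_{\beta}^{G_{\beta}}$, one has $p\restriction_{\alpha} \in G_{\alpha}$ iff $p\restriction_{\beta} \in G_{\beta}$ and $p(\beta)^{G_{\beta}} \in H$; likewise for $p'$. The hypothesis $p\restriction_{\beta} \forces p(\beta) = \sigma_{\beta} \in \dot{\mathbb{Q}}_{\beta}$ together with the inductive forcing equivalence yields $p'\restriction_{\beta} \forces \sigma_{\beta} \in \dot{\mathbb{Q}}_{\beta}$, securing (i) at $\alpha$, and also ensures that whenever $p\restriction_{\beta} \in G_{\beta}$ the values $p(\beta)^{G_{\beta}}$ and $\sigma_{\beta}^{G_{\beta}} = p'(\beta)^{G_{\beta}}$ coincide, which delivers (ii) at $\alpha$.

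The main obstacle, modest but essential, is this successor step: the validity of $p'$ as an element of $\mathbb{P}_{\alpha}$ is not a priori obvious and must be extracted from the inductive forcing equivalence of the restrictions, which is the very same mechanism that makes the coordinates of $p$ and $p'$ at stage $\beta$ indistinguishable in any generic extension. Everything else is routine bookkeeping intrinsic to countable-support iterations; once the forcing equivalence of $p$ and $p'$ at level $\theta$ is in hand, the biconditional in the statement (viewing both conditions as $\mathbb{P}_{\kappa}$-conditions by the canonical inclusion) follows in one line from the elementary fact that two forcing-equivalent conditions force the same formulas about the same names.
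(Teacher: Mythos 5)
Your reduction of the lemma to ``$p$ and $p'$ belong to exactly the same generic filters'' is sound in principle, but the induction you sketch has a genuine gap at the limit stage, and the gap sits precisely on the feature of this forcing that makes the lemma nontrivial. A $CS^{*}$-iteration is not an ordinary countable support iteration: by definition, $q\leq p$ requires, in addition to the coordinatewise condition, that $\diff(q,p)=\{\beta\in\dom{q}\cap\dom{p} : q\restriction_{\beta}\nVdash q(\beta)=p(\beta)\}$ be \emph{finite}. Consequently, for a limit $\gamma$ of cofinality $\omega$ and a condition $p$ whose support is cofinal in $\gamma$, the cone below $p$ is in general strictly smaller than the set of $q$ with $q\restriction_{\beta}\leq p\restriction_{\beta}$ for all $\beta<\gamma$: each restriction contributes a finite difference, but the union of these can be infinite. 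Your assertion that ``membership in $G_{\alpha}$ is determined coordinatewise via the induced generics $G_{\beta}$'' is therefore not something you can simply cite; it amounts to the claim $[[p\in\dot G_{\alpha}]]=\prod_{\beta<\alpha}[[p\restriction_{\beta}\in\dot G_{\beta}]]$, which already needs an argument for plain countable support iterations when the support of $p$ is cofinal in $\alpha$, and for the restricted $CS^{*}$ order it is exactly the delicate point. Your sketch offers no argument for it, and without it the inductive step (ii) does not go through at limits. (The successor step via the factorization $\mathbb{P}_{\beta+1}\cong\mathbb{P}_{\beta}*\dot{\mathbb{Q}}_{\beta}$ is essentially fine, though even there one is implicitly using the structural theory of $CS^{*}$-iterations from the cited source.)

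The paper's proof avoids all of this by staying inside the partial order: it proves by induction on $\gamma$ the statement $(\triangle)_{\gamma}$, that $p\restriction_{\gamma}$ and $p'\restriction_{\gamma}$ have the same set of extensions, and in doing so it explicitly checks that $\diff(r,p\restriction_{\gamma})$ and $\diff(r,p'\restriction_{\gamma})$ coincide, so that the finiteness clause of the $CS^{*}$ order transfers from one to the other. Having the same cone gives $p\leq p'$ and $p'\leq p$ at once, hence that they force the same sentences, with no appeal to generic filters. If you want to keep your generic-filter formulation, you would still have to carry out essentially this order-theoretic computation, including the $\diff$ bookkeeping, to justify the limit step; at that point the detour through generics buys nothing.
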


\begin{proof}
	Given $p, p' \in \mathbb{P}_{\theta}$ as in the hypothesis we prove the following statement by induction on $\gamma \leq \theta$:
	
	$$(\triangle)_{\gamma} \ \forall r (r \in \mathbb{P}_{\gamma} \rightarrow ( r \leq p \restriction \gamma \leftrightarrow r \leq p' \restriction \gamma ))$$
	
	Notice that $(\triangle)_{\gamma}$ implies that $p\restriction \gamma$ and $p'\restriction \gamma$ forces the same statements.
	
	Fix $\gamma \leq \theta $ and suppose $r \leq p\restriction \gamma$, then $r \leq p'\restriction \gamma$. Indeed, we have $r \restriction_{ \alpha} \forces r(\alpha) \leq p(\alpha)$, since $r \restriction_{\alpha} \leq p \restriction_{\alpha}$. Therefore $r \restriction_{ \alpha } \forces r(\alpha) \leq p(\alpha) = p'(\alpha)$. We need to see now that $\diff(r,p'\restriction_{\gamma})$ is finite. If $\alpha \in \diff(r, p'\restriction_{\gamma})$ then $r \restriction_{ \alpha }\not\forces r(\alpha) = p'(\alpha)$. Since $r \restriction_{ \alpha }\forces p(\alpha) = p'(\alpha)$, $r \restriction_{ \alpha} \not\forces r(\alpha) = p(\alpha)$. Therefore $\alpha \in \diff(r,p\restriction_{\gamma})$.  
	
	Let $ r \leq p' \restriction_{\gamma}$. If $ \zeta  < \gamma $, then from our induction hypothesis $(\triangle)_{\zeta}$ we have that $p'\restriction_{\zeta} \forces p(\zeta) = \sigma_{\zeta}$. Therefore $r \leq p' \restriction_{\gamma}$ implies $r\restriction \zeta \forces r(\zeta) \leq \sigma_{\zeta} = p(\zeta)$. Thus $r \leq p \restriction_{\gamma}$.  Now fix $\alpha \in \diff(r,p\restriction_{\gamma})$, then $r\restriction_{\alpha} \not \forces p(\alpha) = r(\alpha)$. Since $\alpha < \gamma$ we have $r\restriction_{\alpha} \forces p(\alpha) = p'(\alpha)$, it follows that $r \restriction_{\alpha} \not\forces p'(\alpha) = r(\alpha)$. Hence $\alpha \in \diff(r,p'\restriction_{\gamma})$ and  $(\triangle)_{\gamma}$ holds.
	
	Now $(\triangle)_{\theta}$ implies the lemma.

\end{proof}

We are finally ready to prove Theorem \ref{CohenCS}.

\begin{proof}[\textbf{Proof of Theorem \ref{CohenCS}}]
	Let $\sigma$ be a name for a given $f:\omega_{1} \rightarrow \omega$ in the extension.
	We start by taking any $p \in \mathbb{P}$ such that $p \forces \sigma: \check{\omega}_1 \rightarrow \omega$. Then we will show that there is a condition $p_{*} < p$, $\eta < \omega_1$ and $n,m \in \omega $ such that
	
	\begin{gather*} 
		p_* \forces  (  \check{A}_{\eta}^{m} \subseteq \sigma^{-1}[\{\check{n}\}] \wedge \sigma(\check{\eta})=\check{n})
	\end{gather*}

	Following \cite{chen} we shall construct a  $\omega_{1}$ sequence that decides the evaluation of $\sigma$ on each $\alpha < \omega_1$. Then we use a pressing down argument to find a stationary set in $V$ where we can apply a $\clubsuit_{F}$-sequence derived from a $\diamondsuit^*$-sequence in order to find the condition $p_*$.

	We shall construct inductively  $\langle q_{\alpha} \mid \alpha < \omega_1\rangle$, $\langle n_{\alpha} \mid \alpha < \omega_1 \rangle$, together with an auxiliary sequence $\langle p_{\alpha} : \alpha < \omega_{1} \rangle$. These three sequences should satisfy the following conditions for all $\alpha < \omega_{1}$:
	
	\begin{enumerate}
		\item $\langle p_{\gamma} : \gamma < \alpha \rangle$ is a sequence such that $ \gamma < \gamma' < \alpha$ implies $p_{\gamma'} \leq^{h} p_{\gamma}$;
		\item $q_{\alpha} \leq^{v} p_{\alpha}$ and $q_{\alpha} \forces \sigma(\alpha) = \mbox{\v{n}}_{\alpha}$;
		\item $u_{\alpha} = \diff(q_{\alpha},p_{\alpha}) \subseteq \dom{p} \cup \bigcup_{\gamma < \alpha } \dom{q_{\gamma}}$;
		\item  $q_{\alpha} \restriction_{ (dom(p_{\alpha}) \setminus u_{\alpha} )}= p_{\alpha} \restriction_{(dom(p_{\alpha}) \setminus u_{\alpha} )}$;
		\item $q_{\alpha} \restriction_{u_{\alpha}} \in Fn(\kappa, \{\check{t} : t \in Fn(\omega,2)\})$.
	\end{enumerate}
	
	Suppose we already constructed $\langle p_{\beta} :\beta < \alpha \rangle $, $ \langle q_{\beta} : \beta < \alpha \rangle $, and $ \langle n_{\beta} : \beta < \alpha \rangle $. 
	
	Let $r_{\alpha} = p$ if $\alpha = 0$ or $r_{\alpha} = \bigcup_{\beta \in \alpha} p_{\beta}$ otherwise. Fix $w_{\alpha} \leq r_\alpha$ such that there is $n_{\alpha} \in \omega$ satisfying $w_{\alpha} \forces \sigma(\alpha) = \check{n}_{\alpha}$. Applying lemma \ref{decide} on $r_{\alpha}$ and $w_{\alpha}$, we obtain $q^{*}_{\alpha} \leq w_{\alpha}$ such that, for all $\gamma \in \diff(q^{*}_{\alpha},r_{\alpha})$, there exists $\check{t}_{\gamma}$ satisfying $q^{*}_{\alpha}\restriction_{ \gamma} \forces q^{*}_{\alpha}(\gamma)=\check{t}_{\gamma} $. \\
	
	Define the following:
	
	\begin{itemize} 
		\item $p_{\alpha} = r_{\alpha}  \cup (q^{*}_{\alpha} \restriction_{dom(q^{*}_{\alpha}) \setminus dom(r_{\alpha})})$; 
		\item $d_{\alpha} = \diff(q^{*}_{\alpha},p_{\alpha})$; 
		\item $q_{\alpha} = p_{\alpha} \restriction_{\dom{p_{\alpha}} \setminus d_{\alpha}} \cup \,\, \{\check{t}_{\gamma} : \gamma \in  d_{\alpha} \}$.
	\end{itemize} 
	
	Let us verify that $p_{\alpha}$ and $q_{\alpha}$ satisfy conditions $(1)-(5)$.\\
	
	First we note that $p_\alpha \leq^{h} r_{\alpha}$, and therefore $p_\alpha \leq^{h} p_{\beta}$ for all $\beta < \alpha$. Indeed, $\dom{r_{\alpha}} \subseteq \dom{p_{\alpha}}$ and for all $\gamma \in \dom{r_{\alpha}}$ we have $p_{\alpha} \restriction_{\gamma} \forces r_{\alpha}(\gamma) = p_{\alpha}(\gamma)$ since they are the same name. Furthermore $\diff(p_{\alpha},r_{\alpha}) = \emptyset$ and, by the definitions given, the inequality holds.\\
	
	Next, we verify (2).  Note that  for all $\gamma \in \dom{q^{*}_{\alpha}} \setminus d_{\alpha} $ we have $$q^{*}_{\alpha} \restriction_{\gamma} \forces q^{*}_{\alpha}(\gamma) = p_{\alpha}(\gamma) = q_{\alpha}(\gamma),$$ and for all $\gamma \in d_{\alpha}$ we have 
	$$q^{*}_{\alpha} \restriction_{\gamma} \forces q^{*}_{\alpha}(\gamma) = \check{t}_{\gamma} = q_{\alpha}(\gamma).$$ It follows from Lemma \ref{rename}  that $q_{\alpha}$ forces the same statements that $q^{*}_{\alpha}$ forces. In particular, $q_{\alpha} \forces \sigma(\alpha) = \mbox{\v{n}}_{\alpha}$.\\
	
	From the definition of $p_{\alpha}$ and $q_{\alpha}$, it follows that $\dom{q^{*}_{\alpha}} = \dom{p_{\alpha}} = \dom{q_{\alpha}}$. Hence, in order to verify $q_{\alpha} \leq^{v} p_{\alpha}$, we only have to verify that $q_{\alpha} \leq p_{\alpha}$.  Consider $\zeta \in dom(p_{\alpha})$. If $\zeta \in dom(r_{\alpha})$, then $p_{\alpha}(\zeta) = r_{\alpha}(\zeta)$ and $q_{\alpha}\restriction_{ \zeta } \forces p_{\alpha}(\zeta) = r_{\alpha}(\zeta) \geq q^{*}_{\alpha}(\zeta) = q_{\alpha}(\zeta) $, since $q^{*}_{\alpha}\restriction_{\zeta}$ also forces it. \\
	
	If $\zeta \in dom(p_{\alpha}) \setminus dom(r_{\alpha}) $, since $d_{\alpha} \subseteq dom(r_{\alpha})$, then $p_{\alpha}(\zeta) = q^{*}_{\alpha}(\zeta)$. Therefore $$q_{\alpha}\restriction_{\zeta} \ \forces  q_{\alpha}(\zeta) =  q^{*}_{\alpha}(\zeta) =  p_{\alpha}(\zeta) .$$ \\
	
	Now we verify condition (3). By Lemma \ref{rename}, $\gamma \in \diff(q_{\alpha},p_{\alpha})$ if and only if $\gamma \in \diff(q^{*}_{\alpha},p_{\alpha})$. Therefore $u_{\alpha} = d_{\alpha}$. Furthermore, from our induction hypothesis $$u_{\alpha} \subset \dom{r_{\alpha}} \subseteq  dom(p) \cup \bigcup_{\gamma < \alpha } \dom{p_{\gamma}} = dom(p) \cup \bigcup_{\gamma < \alpha } \dom{q_{\gamma}}$$\\
	
	Condition (4) and (5) follows directly from our definitions of $q_{\alpha}$  and the fact observed above that $u_{\alpha}=d_{\alpha}$.\\

	Consider now a bijection $\Phi: \bigcup_{\beta < \omega_{1}} dom(q_{\beta}) \rightarrow \omega_{1}$, and let $Y \subseteq \omega_{1}$ be a club such that, for all $\alpha \in Y$, $$\Phi(\bigcup_{\beta<\alpha} dom(q_{\beta})) \subseteq \alpha.$$ Therefore, for all $\alpha \in Y$, we have $a_{\alpha}:=\Phi[u_{\alpha}] \subseteq \alpha$.\\ 
	
	Let $Y_0 \subseteq Y$ be a stationary set and $k \in \omega$ be such that for all $\alpha \in Y_0, \,\, |a_{\alpha}|=k$. Let $\phi_{0}:Y_0 \rightarrow \omega_1$ be a regressive function given by $\phi_{0}(\alpha)= min(a_{\alpha})$. Applying Fodor's Lemma, we find $Y_{1} \subseteq Y_{0}$ stationary such that $\phi_{0}$ is constant on $Y_{1}$. Recursively, for $n < k $, we construct $\phi_{n}:Y_{n} \rightarrow \omega_1$ such that $\phi_{n}(\alpha)$ is the $n^{th}$ element of $a_{\alpha}$. After $k$ iterations we find $Y_{k+1}$ stationary and $a \in \omega_{1}^{<\omega}$ such that $\alpha \in Y_{k+1}$ implies $a_{\alpha}=a$.\\
	
	For all $\alpha \in Y_{k+1}$ we have $u_{\alpha}=u$ for a fixed $u \in \kappa^{< \omega}$. The set $$W = \{ r \in Fn(\kappa,Fn(\omega,2)) : dom(r) = u \}$$ is countable, therefore there is $S \subseteq Y_{k+1}$ stationary and $r \in W$ such that for all $\alpha \in S$ and $\gamma \in u$ we have $q_{\alpha}(\gamma)=\check{r}(\gamma)$.\\
	
	Fix $n^* \in \omega$ such that $T = \{ \alpha \in S : n_{\alpha}=n^* \}$ is stationary. Using that $\vec{B}$ is a $\diamondsuit^{*}$-sequence, we can find a club $C$ such that, for every $\alpha \in C $, there exists $m \in \omega $ such that $ B_{\alpha}^{m} = T \cap \alpha$. Finally, let $ \eta \in C \cap T \cap acc(T)$, then $sup(B_{\alpha}^{m}) = \eta$ and $B_{\eta}^{m} = A_{\eta}^{m}$.\\
	
	Next, consider $\xi \in T$ such that  $\xi > \eta$. Let $p_*=q_{\xi}$. We shall verify that that $p_*$ is the condition we are looking for.\\
	
	First we shall see that if $\alpha \in S \cap \xi$, then $q_{\xi} \leq^{h} q_{\alpha}$. For every $\alpha \in Y$ it holds that $q_{\alpha} = p_{\alpha} \restriction_{(dom(p_{\alpha} \setminus d_{\alpha})}$. 
	We know that $p_{\xi} \leq^{h} p_{\alpha}$ and $u_{\alpha}=d_{\alpha}$, therefore $p_{\alpha} \restriction_{(dom(p_{\alpha} \setminus d_{\alpha})} = p_{\xi}  \restriction_{(dom(p_{\alpha} \setminus d_{\alpha})}$.
	Hence $q_{\alpha}\restriction_{(dom(p_{\alpha} \setminus d_{\alpha})} = q_{\xi} \restriction_{(dom(p_{\alpha} \setminus d_{\alpha})}$. From $\alpha \in S$ we have that $u_{\alpha} = u = u_{\xi}$ and  $q_{\alpha} \restriction_{ u_{\alpha}} = q_{\xi} \restriction_{u}$. Thus $q_{\xi} \leq^{h} q_{\alpha}$ and we have $q_{\xi} \forces \sigma \restriction_{ S \cap \xi} = \langle n_{\alpha} \mid \alpha \in S \cap \xi \rangle $.
\end{proof}

Next, given a cardinal $\kappa$ as in the hypothesis of Theorem \ref{CohenCS}, we will address what happens with $\mathfrak{d}$ and $2^{\aleph_0}$ in any $\mathbb{P}_{\kappa}$ extension. We will need these results in Section \ref{Top}, where we also obtain that $\mathfrak{b}=\omega_{1} < 2^{\aleph_0}$ in a $\mathbb{P}_{\kappa}$ extension as application of Theorem \ref{main1} (see Corollary \ref{b1}). \\

We first observe that, as in the usual Cohen forcing, under certain assumptions, our extension will have $\omega_{1} < \mathfrak{d}$. In order to verify that  we will use the two following Lemmas:

\vspace*{0.25cm}

\begin{lemma}\cite[Lemma 5.4]{fuchino} \label{kcc} Suppose that, for all $\alpha < \kappa$, we have $\alpha^\omega < \kappa$ and $\kappa$ is a regular cardinal $> \omega_1$. Then the forcing $\mathbb{P}_{\kappa}$ has the $\kappa$-cc property. 
\end{lemma}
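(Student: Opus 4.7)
The plan is to run the standard $\Delta$-system plus pigeonhole argument tailored to $CS^{*}$-iterations. Let $\{p_{\xi} : \xi < \kappa\}$ be any family of conditions in $\mathbb{P}_{\kappa}$. Each $\dom(p_{\xi})$ is a countable subset of $\kappa$. Since $\kappa$ is regular and $|\alpha|^{\omega}<\kappa$ for every $\alpha<\kappa$, the $\Delta$-system lemma yields $X\subseteq \kappa$ of size $\kappa$ such that $\{\dom(p_{\xi}):\xi\in X\}$ forms a $\Delta$-system with countable root $R\subseteq \kappa$. By the regularity of $\kappa$, $\alpha^{*}:=\sup(R)+1<\kappa$.

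The next step is to further thin $X$ so that the restrictions $p_{\xi}\restriction_{R}$ all agree. For this I would prove by transfinite induction on $\alpha<\kappa$ that $|\mathbb{P}_{\alpha}|<\kappa$: at a successor step, the number of nice $\mathbb{P}_{\alpha}$-names for an element of the countable poset $Fn(\omega,2)$ is bounded by $|\mathbb{P}_{\alpha}|^{\omega}$; at a limit step, countable supports together with $|\alpha|^{\omega}<\kappa$ and the inductive bound give $|\mathbb{P}_{\alpha}|<\kappa$. In particular $|\mathbb{P}_{\alpha^{*}}|<\kappa$, so the restrictions $p_{\xi}\restriction_{R}\in\mathbb{P}_{\alpha^{*}}$ take fewer than $\kappa$ many values; by pigeonhole there are $X'\subseteq X$ of size $\kappa$ and a fixed $r^{*}\in\mathbb{P}_{\alpha^{*}}$ with $p_{\xi}\restriction_{R}=r^{*}$ for every $\xi\in X'$.

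It remains to check compatibility. Choose any distinct $\xi,\xi'\in X'$ and set $p:=p_{\xi}\cup p_{\xi'}$. This is a well-defined function on $\dom(p_{\xi})\cup\dom(p_{\xi'})$: the two conditions literally agree on $R$ (same $r^{*}$) and have disjoint supports outside $R$. A straightforward induction on $\beta\leq\kappa$ shows $p\restriction_{\beta}\in\mathbb{P}_{\beta}$ and $p\restriction_{\beta}\leq p_{\xi}\restriction_{\beta},\,p_{\xi'}\restriction_{\beta}$: whenever $\beta\in\dom(p_{\xi})$ we have $p(\beta)=p_{\xi}(\beta)$ as the very same name, so $p\restriction_{\beta}\forces p(\beta)=p_{\xi}(\beta)$ trivially, which gives $\diff(p,p_{\xi})=\emptyset$ (the $CS^{*}$ clause is automatic here), and symmetrically for $\xi'$. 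Thus $p_{\xi}$ and $p_{\xi'}$ have a common extension, establishing the $\kappa$-cc.

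The main obstacle is the recursive bound $|\mathbb{P}_{\alpha}|<\kappa$: it is the cardinal arithmetic and nice-name bookkeeping that really uses both hypotheses ($\kappa$ regular and $\alpha^{\omega}<\kappa$); once that is in hand, both the $\Delta$-system reduction and the final "glue on disjoint support" step are routine, the latter being especially clean for $CS^{*}$-iterations because the difference set is forced to be empty rather than merely finite.
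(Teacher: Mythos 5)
Your opening and closing steps are sound: the hypotheses $\kappa$ regular and $\alpha^{\omega}<\kappa$ are exactly what the generalized $\Delta$-system lemma for countable sets needs (this is the only thing the paper itself checks before deferring wholesale to the proof of \cite[Lemma 5.4]{fuchino}), and your gluing argument is correct \emph{provided} the two conditions literally carry the same names on the root, since then both difference sets are empty. The gap is the middle step, the inductive claim $|\mathbb{P}_{\alpha}|<\kappa$. Your successor-stage bound --- ``the number of nice $\mathbb{P}_{\alpha}$-names for an element of $Fn(\omega,2)$ is at most $|\mathbb{P}_{\alpha}|^{\omega}$'' --- tacitly assumes that antichains of $\mathbb{P}_{\alpha}$ are countable, i.e.\ that $\mathbb{P}_{\alpha}$ is ccc. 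It is not: already $\mathbb{P}_{\omega}$ with the $CS^{*}$ ordering has antichains of size $2^{\aleph_{0}}$. Indeed, fix $t_{0}\perp t_{1}$ in $Fn(\omega,2)$ and for $x\in 2^{\omega}$ let $p_{x}$ have domain $\omega$ with $p_{x}(n)=\check{t}_{x(n)}$; if $x$ and $y$ differ at infinitely many $n$, any common extension $r$ would need $\diff(r,p_{x})$ and $\diff(r,p_{y})$ both finite, hence would force $\check{t}_{0}=\check{t}_{1}$ at some coordinate, which is absurd, and an almost disjoint family supplies $2^{\aleph_{0}}$ pairwise infinitely-differing $x$'s. (This non-ccc-ness is the whole point of using a $CS^{*}$-iteration instead of a finite support one; if your bound were valid the lemma would be trivial.) A nice name is indexed by a maximal antichain, so the honest successor bound is of the order of $2^{|\mathbb{P}_{\alpha}|}$, and in the intended application ($V=L$, $\kappa=\aleph_{2}$) this gives only $|\mathbb{P}_{\omega+1}|\leq 2^{\aleph_{1}}=\aleph_{2}=\kappa$. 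So nothing in your induction keeps the number of possible values of $p_{\xi}\restriction_{R}$ below $\kappa$, and the pigeonhole step collapses.

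This is not a bookkeeping issue that more care repairs: with all nice names admitted as coordinate values there really can be $\kappa$ many pairwise inequivalent conditions supported on a fixed countable set, so no direct count of root restrictions can succeed. One must either first pass to a dense set of ``canonical'' conditions before counting --- but note that fully decided conditions (all values check-names) are \emph{not} dense here, because deciding the values at infinitely many coordinates one after another typically produces an infinite $\diff$ from the original condition, which the $CS^{*}$ ordering forbids (Lemma \ref{decide} only decides the finitely many coordinates in a difference set) --- or replace the counting of conditions by a counting of countable isomorphism types (transitive collapses of countable elementary submodels containing the $p_{\xi}$), which is the standard device for $\kappa$-cc of countable-support-style iterations and is the kind of extra input the cited argument of \cite[Lemma 5.4]{fuchino} supplies beyond the $\Delta$-system lemma. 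As it stands, your proof establishes the easy reductions but not the step that actually carries the content of the lemma.
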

\begin{proof}
	We are in the hypothesis of the $\Delta$-system lemma used in \cite{fuchino}. The proof of \cite[Lemma 5.4]{fuchino} also holds here. 
\end{proof}

\vspace*{0.25cm}

\begin{lemma}\label{dmaiorkappa}
	Let $ \kappa $ be a regular cardinal such that, for all $\alpha < \kappa$, we have $\alpha^{\omega} < \kappa$. Let $\mathbb{P}_{\kappa}$ be the a $CS^{*}$-iteration of Cohen forcing of length $\kappa$. Then,  in any $\mathbb{P}_{\kappa}$-generic extension, it holds that $\mathfrak{d}\geq \kappa$.  
\end{lemma}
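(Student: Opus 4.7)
The plan is to show that in a $\mathbb{P}_{\kappa}$-generic extension, no family of fewer than $\kappa$ functions in $\omega^{\omega}$ can be dominating, which yields $\mathfrak{d}\geq\kappa$. The argument combines three ingredients: a reflection principle saying every real in the extension already appears at some intermediate stage, the fact that each Cohen iterand adds an unbounded real over the preceding extension, and the regularity of $\kappa$ to glue them together.

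For the reflection, I would argue that given any $\mathbb{P}_{\kappa}$-name $\dot{f}$ with $\1 \forces \dot{f}:\check{\omega}\to\check{\omega}$, there exists $\alpha<\kappa$ such that $\dot{f}$ can be taken to be a $\mathbb{P}_{\alpha}$-name. For each $n\in\omega$, choose a maximal antichain $A_{n}$ of conditions deciding the value of $\dot{f}(\check{n})$. By Lemma \ref{kcc}, $|A_{n}|<\kappa$, and since each condition in $A_{n}$ has countable support, the set $s_{n}=\bigcup\{\dom{q}:q\in A_{n}\}$ has cardinality $<\kappa$ and is therefore bounded in $\kappa$ by regularity. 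Setting $\alpha=\sup_{n\in\omega}\sup(s_{n})$, we use that $\kappa$ is regular with $\kappa>\omega$ to conclude $\alpha<\kappa$, so the whole name lives inside $\mathbb{P}_{\alpha}$ and the realization $\dot{f}^{G}$ lies in the intermediate extension $V[G\cap\mathbb{P}_{\alpha}]$.

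Next, I would observe that each iterand adds an unbounded real. Working over $V[G\cap\mathbb{P}_{\alpha}]$, partition $\omega$ into consecutive finite blocks $B_{n}$ of length $n+1$, and from the Cohen generic $c_{\alpha}:\omega\to 2$ at stage $\alpha$ define $g(n)$ by reading $c_{\alpha}\restriction_{B_{n}}$ as a binary integer. Given any $f\in V[G\cap\mathbb{P}_{\alpha}]\cap\omega^{\omega}$, a density argument shows that $\{n:g(n)>f(n)\}$ is infinite: for any condition $q\in Fn(\omega,2)$ and any $N\in\omega$, pick $n\geq N$ large enough that $B_{n}\cap\dom{q}=\emptyset$ and $2^{n+1}>f(n)$, then extend $q$ on $B_{n}$ to force $g(n)>f(n)$. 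Combining the two steps, let $\mathcal{F}\subseteq V[G]\cap\omega^{\omega}$ have cardinality $\lambda<\kappa$. By reflection each $f\in\mathcal{F}$ lies in some $V[G\cap\mathbb{P}_{\alpha_{f}}]$ with $\alpha_{f}<\kappa$, and by regularity $\alpha^{*}:=\sup\{\alpha_{f}:f\in\mathcal{F}\}<\kappa$, so $\mathcal{F}\subseteq V[G\cap\mathbb{P}_{\alpha^{*}}]$. The function $g$ produced from $c_{\alpha^{*}}$ then escapes every member of $\mathcal{F}$, so $\mathcal{F}$ is not dominating and $\mathfrak{d}\geq\kappa$.

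The main obstacle is the reflection step: in the $CS^{*}$ framework one must verify that a name whose deciding antichains live entirely below stage $\alpha$ can genuinely be regarded as a $\mathbb{P}_{\alpha}$-name, and that $G\cap\mathbb{P}_{\alpha}$ is $\mathbb{P}_{\alpha}$-generic over $V$. These are standard facts for ordinary countable-support iterations, but the presence of the $\diff$ condition in the $CS^{*}$ ordering means one should double-check that the restriction map on conditions interacts correctly with the iteration structure and that the usual factoring $V[G]=V[G\cap\mathbb{P}_{\alpha}][G/\mathbb{P}_{\alpha}]$ still holds.
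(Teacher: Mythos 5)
Your proposal is correct and follows essentially the same route as the paper's proof: reflect the putative small dominating family below some stage $\xi<\kappa$ using the $\kappa$-cc of Lemma \ref{kcc} together with countable supports and the regularity of $\kappa$, then observe that the Cohen real added at the next stage is unbounded over that intermediate model. You merely spell out the steps the paper dismisses as ``standard forcing arguments'' (the antichain/nice-name reflection and the density argument for unboundedness), and the factoring issue you flag at the end is glossed over in the paper as well; only a trivial off-by-one in the requirement $2^{n+1}>f(n)$ would need adjusting.
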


\begin{proof}
	Let $V[G]$ be a $\mathbb{P}_{\kappa}$-generic extension of $V$. Consider $\mathcal{F} \in V[G]$ a dominating family of size $\mathfrak{d}$. By contradiction, suppose that $\mathfrak{d} < \kappa$. In $V[G]$, let $\mathcal{F}=\{f_{\alpha} \mid \alpha < \mathfrak{d} \}$ and let $\Phi:\mathfrak{d}\times \omega \rightarrow \omega$ such that $\Phi(\alpha,m)=f_{\alpha}(m)$ for each $\alpha < \mathfrak{d}$ and $m \in \omega$.
	
	Then $\Phi  \subseteq \mathfrak{d} \times \omega \times \omega$ codes $\mathcal{F}$. Using that $\mathbb{P}_{\kappa}$ is $\kappa$-cc by standard forcing arguments it follows that $\Phi \in V[G \restriction \xi]$ for some $\xi < \kappa$, and consequently $\mathcal{F} \subseteq V[G \restriction \xi]$.

	Let $ x_{\xi+1}$ be the Cohen real added at step $\xi+1$. It follows that $x_{\xi+1}$ can not be dominated by any real in $V[G \restriction \xi] \supseteq \mathcal{F}$, contradicting our hypothesis that $\mathcal{F}$ is a dominating family.
	
	Thus $\mathfrak{d} \geq \kappa$ in $V[G]$. 
	
\end{proof}

\begin{cor}\label{omega1} Suppose $\diamondsuit^{*}$ holds \footnote{We included $\diamondsuit^{*}$ as in the hypothesis of Theorem \ref{CohenCS}, but for Corollary \ref{omega1} we do not need that. We need that $\omega_{1}$ is not collapsed, which follows, for example, from \cite[Corollary 5.3]{fuchino}. }.  Let $ \kappa $ be a regular cardinal such that, for all $\alpha < \kappa$, we have $\alpha^{\omega} < \kappa$. Let $\mathbb{P}_{\kappa}$ be the a $CS^{*}$-iteration of Cohen forcing of length $\kappa$. Then $1_{\mathbb{P}_{\alpha}} \forces \omega_{1} < \mathfrak{d}$.
\end{cor}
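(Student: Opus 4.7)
The plan is to assemble three ingredients: the lower bound $\mathfrak{d} \geq \kappa$ in the extension given by Lemma \ref{dmaiorkappa}, the preservation of $\omega_1$ alluded to in the footnote, and the fact that $\kappa$ itself remains a cardinal above $\omega_1$ in the extension. Once these are in place, the corollary is immediate.

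First I would verify that already in $V$ we have $\kappa \geq \aleph_2$. Indeed, the hypothesis $\alpha^{\omega} < \kappa$ for every $\alpha < \kappa$ applied to $\alpha = \omega$ yields $2^{\aleph_0} < \kappa$; since $2^{\aleph_0} \geq \aleph_1$ and $\kappa$ is regular, this forces $\kappa \geq \aleph_2$. (Alternatively, $\diamondsuit^{*}$ implies CH, so $\aleph_1^{\omega} = \aleph_1 < \kappa$ would give the same conclusion if one prefers to use the hypothesis $\diamondsuit^{*}$.)

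Next, let $G$ be $\mathbb{P}_{\kappa}$-generic over $V$. By the reference \cite[Corollary 5.3]{fuchino} cited in the footnote, $\mathbb{P}_{\kappa}$ preserves $\omega_1$, so $\omega_1^{V[G]} = \omega_1^{V}$. By Lemma \ref{kcc}, $\mathbb{P}_{\kappa}$ has the $\kappa$-cc, and hence preserves all cardinals $\geq \kappa$; in particular $\kappa$ itself remains a cardinal in $V[G]$. Combining these, $\omega_1^{V[G]} = \omega_1^{V} < \aleph_2^{V} \leq \kappa = \kappa^{V[G]}$, so $\omega_1 < \kappa$ holds in $V[G]$.

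Finally, Lemma \ref{dmaiorkappa} gives $\mathfrak{d} \geq \kappa$ in $V[G]$, and stringing the inequalities together we get $\omega_1 < \kappa \leq \mathfrak{d}$ in $V[G]$. Since $G$ was an arbitrary generic, $1_{\mathbb{P}_{\kappa}} \Vdash \omega_1 < \mathfrak{d}$, as required. The main potential obstacle is purely bookkeeping: one must be careful to distinguish $V$-cardinals from $V[G]$-cardinals when concluding $\kappa > \omega_1$ in the extension, but both preservation statements cited above handle this without further work.
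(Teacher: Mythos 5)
Your proof is correct and follows essentially the same route as the paper: $\omega_1$ is preserved, Lemma \ref{dmaiorkappa} gives $\mathfrak{d} \geq \kappa$ in the extension, and $\omega_1 < \kappa$ finishes the argument. The only cosmetic difference is that the paper justifies the preservation of $\omega_1$ via Theorem \ref{CohenCS} (no $\sigma:\check{\omega}_1\rightarrow\omega$ is forced to be a bijection) rather than via \cite[Corollary 5.3]{fuchino}, an alternative the footnote itself sanctions; your extra checks that $\kappa\geq\aleph_2$ and that $\kappa$ remains a cardinal are harmless bookkeeping.
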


\begin{proof} 
	From Theorem \ref{CohenCS} we have that $1_{\mathbb{P}_{\kappa}} \forces \omega_{1} = \check{\omega}_{1}$. By Lemma \ref{dmaiorkappa} we have $1_{\mathbb{P}_{\kappa}} \forces \kappa \leq \mathfrak{d}$. Thus  $1_{\mathbb{P}_{\kappa}} \forces \omega_1 < \kappa \leq \mathfrak{d}$.
\end{proof}

\begin{cor}
	$Con(ZFC) \rightarrow Con(ZFC + \clubsuit_{F} + \neg CH)$
\end{cor}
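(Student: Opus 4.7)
The plan is to combine the results already proved in this section. Starting from an arbitrary model of $ZFC$, pass to the constructible universe $L$, which satisfies $\diamondsuit^{*}$ (and hence $GCH$). Working inside this model, choose $\kappa = \aleph_{2}$: this cardinal is regular and $\geq \aleph_{2}$, and since $\diamondsuit^{*}$ implies $CH$ we have $\alpha^{\omega} \leq \aleph_{1}^{\omega} = 2^{\aleph_{0}} = \aleph_{1} < \aleph_{2}$ for every $\alpha < \kappa$. So the hypotheses of both Corollary \ref{Cor2} and Lemma \ref{dmaiorkappa} are satisfied.

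Next I would force with the Cohen $CS^{*}$-iteration $\mathbb{P}_{\kappa}$ of length $\kappa$ over this ground model, obtaining a generic extension $V[G]$. Fix in the ground model a $\diamondsuit^{*}$-sequence $\vec{B}$ and let $\vec{A}$ be its derived sequence. The claim will be that $V[G]$ is a model of $ZFC + \clubsuit_{F} + \neg CH$.

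Three facts are needed in the extension. First, $\omega_{1}$ is preserved, as noted in the footnote of Corollary \ref{omega1}. Second, Corollary \ref{Cor2} yields exactly that $\vec{A}$ witnesses $\clubsuit_{F}$ in $V[G]$. Third, Lemma \ref{dmaiorkappa} gives $\mathfrak{d} \geq \kappa = \aleph_{2}$ in $V[G]$; combined with the $ZFC$ fact that $2^{\aleph_{0}} \geq \mathfrak{d}$, this yields $2^{\aleph_{0}} \geq \aleph_{2}$ in $V[G]$, so $\neg CH$ holds there.

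Since all the substantive work has been packaged into Theorem \ref{CohenCS}, Corollary \ref{Cor2}, Lemma \ref{dmaiorkappa} and the results cited from \cite{fuchino}, there is essentially no obstacle here: the proof is a bookkeeping assembly of these pieces, the only non-trivial verification being the elementary cardinal arithmetic ensuring that $\kappa = \aleph_{2}$ satisfies $\alpha^{\omega} < \kappa$ for all $\alpha < \kappa$ under $CH$.
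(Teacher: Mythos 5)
Your proposal is correct and is exactly the intended argument: the paper states this corollary without proof precisely because it is the bookkeeping assembly of Corollary \ref{Cor2} (for $\clubsuit_F$ in the extension) and Lemma \ref{dmaiorkappa} together with the preservation of $\omega_1$ (for $\neg CH$), instantiated in $L$ with $\kappa=\aleph_2$. Your verification that $\alpha^{\omega}<\aleph_2$ for all $\alpha<\aleph_2$ under $GCH$ is the only computation needed, and it is done correctly.
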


\begin{remark}\label{continuum}
	Under the same hypothesis of Lemma \ref{dmaiorkappa}, by the usual counting of nice names arguments for subsets of $\omega$, if  $\kappa^{<\kappa} = \kappa$, then $\kappa = \mathfrak{c}$.
\end{remark}

We conclude this section with the following question: 

\begin{per}
	Does $\clubsuit_{F}$ imply $\clubsuit$ or vice-versa?
\end{per}

\section{The bound on the character of $X$}\label{Top}

Since  $\chi (X) < \mathfrak{b}$ appears in the hypothesis of Theorem \ref{main1}, it becomes relevant to ask whether $\mathfrak{b}$ really is the best bound. A construction made in  \cite[Theorem 4]{WeissKomjath} shows us that this is the case under $\diamondsuit$.\\

The main thms of this section are Theorem \ref{exampleclub} and Theorem \ref{Cor1}. They both use our results from the previous section and show the consistency of the existence of a topological space $(X,\tau)$ such that $X \rightarrow (top \, \omega+1)^{1}_{\omega}$, $X \nrightarrow (top \, \omega^{2}+1)^{1}_{\omega}$, and $\chi(X) = \omega_{1} < \mathfrak{c}$. Also, together with  Theorem \ref{main1}, they also verify that the principle $\clubsuit_{F}$ and the existence of such topological space are consistent with $\mathfrak{b} = \omega_{1} < \mathfrak{c}$.\\

In Theorem \ref{exampleclub} we will change the construction from \cite[Theorem 4]{WeissKomjath}, so we can get it using $\clubsuit_{F}$ instead of $\diamondsuit$. This will give us a space with almost the same properties as the space obtained in \cite[Theorem 4]{WeissKomjath}, with the exception that the character of the space in Theorem \ref{exampleclub} will be bounded by $\mathfrak{d}$ instead of $\omega_1$ (see Lemma \ref{character}).

\begin{thm}\label{exampleclub}
	Suppose $\langle A_{\alpha}^{m} \mid \alpha < \omega_{1} \wedge m \in \omega \rangle $ is a $\clubsuit_{F}$-sequence. Then there is a regular topological space $(\omega_{1},\tau)$ such that $(\omega_1,\tau) \rightarrow (\omega+1)^{1}_{\omega}$, $(\omega_{1},\tau) \nrightarrow (\omega^{2}+1)^{1}_{\omega}$ and for every limit ordinal $\alpha \in \omega_{1}$ and for every $m \in \omega$ there exists and increasing sequence of ordinals in $ A_{\alpha}^{m}$ converging to $\alpha$.    
\end{thm}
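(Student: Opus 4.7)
The plan is to adapt the construction from \cite[Theorem 4]{WeissKomjath}, replacing the single $\diamondsuit$-guess at each limit by the countable family of sets $\{A_\alpha^m : m \in \omega\}$ supplied by the $\clubsuit_F$-sequence. Since each $A_\alpha^m$ is unbounded in $\alpha$ by clause (1) of the definition, for every $\alpha \in \acc(\omega_1)$ and every $m \in \omega$ I would fix an injective strictly increasing enumeration $s_{\alpha,m}:\omega \to A_\alpha^m$ with $\sup_k s_{\alpha,m}(k) = \alpha$. The topology $\tau$ on $\omega_1$ is then defined so that every successor ordinal (and $0$) is isolated, while a local base at each limit $\alpha$ is given by sets of the form $\{\alpha\} \cup \bigcup_{m \in \omega} s_{\alpha,m}\bigl[[g(m), \omega)\bigr]$ as $g$ ranges over $\omega^\omega$ (or, later, over a dominating family, to control character as in Lemma \ref{character}). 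Each $s_{\alpha,m}$ converges to $\alpha$ in $\tau$ by construction, so property (3) of the theorem holds immediately.

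Next I would verify that $(\omega_1, \tau)$ is $T_3$. Hausdorffness is straightforward from the cofinality of each $s_{\alpha,m}$ in $\alpha$ and the ability to trim any tail. For regularity, one checks that the closure of a basic neighborhood of a limit $\alpha$ adds no new limit points beyond those already captured by the ladders $s_{\alpha,m}$, so basic neighborhoods can be refined (by shrinking tails at each $m$ via the diagonal indexing $g$) to become clopen in the hereditary-successor part of the space; this is a routine argument in the spirit of \cite{WeissKomjath}.

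The positive partition relation $(\omega_1,\tau) \to (\omega+1)^1_\omega$ is now immediate: given $f:\omega_1 \to \omega$, clause (2) of $\clubsuit_F$ supplies $\alpha \in \acc(\omega_1)$ and $n, m \in \omega$ with $\alpha \in f^{-1}[\{n\}]$ and $A_\alpha^m \subseteq f^{-1}[\{n\}]$. Since $s_{\alpha,m}[\omega] \subseteq A_\alpha^m$ and $s_{\alpha,m}$ converges to $\alpha$, the set $s_{\alpha,m}[\omega] \cup \{\alpha\}$ is a monochromatic copy of $\omega + 1$ in color $n$.

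The main obstacle is the negative relation $(\omega_1,\tau) \nrightarrow (\omega^2+1)^1_\omega$. Here I would exhibit a specific coloring $c:\omega_1 \to \omega$ no color class of which carries a homeomorphic copy of $\omega^2+1$. Any such copy has a top limit point $p$, with $\omega$-many smaller limit points $p_n \to p$, each $p_n$ itself a limit of isolated points converging to it. Convergence to $p$ in $\tau$ can only occur through the countably many ladders $s_{p,m}$, which forces all but finitely many $p_n$ to lie in $\bigcup_m s_{p,m}[\omega]$, and, by a similar analysis at each $p_n$, pins down the structure of the isolated-point level as well. One then colors each ordinal by a code recording its role inside the ladders $s_{\alpha,m}$ at its parent limits, following the Weiss-Komjáth recipe, so that a two-level convergent configuration cannot be monochromatic. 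The delicate point, compared to the $\diamondsuit$ construction, is to check that the presence of $\omega$ ladders per limit (rather than one) does not leave enough room for a monochromatic $\omega^2+1$; this forces the coloring to diagonalize across all $m$ simultaneously, and is the step that will need the most care in the actual write-up.
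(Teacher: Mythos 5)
Your high-level plan matches the paper's (replace the single $\diamondsuit$-guess by the countable family $\{A_\alpha^m : m\in\omega\}$ and get the positive relation from clause (2) of $\clubsuit_F$), but the topology you define is not the right one, and the gap is exactly at the two points you wave at. The paper does \emph{not} put $\omega$ separate ladders at each limit $\alpha$: it first interleaves all the $A_\alpha^m$ into a single strictly increasing sequence $s$ converging to $\alpha$ that meets every $A_\alpha^m$ cofinally (this is what gives the third clause of the theorem), and then runs the Weiss--Komj\'ath neighbourhoods verbatim on that one ladder, $N_\alpha(h,p)=\{\alpha\}\cup\bigcup_{m\ge p}(\beta^m_{h(m)},s(m)]$, as a refinement of the order topology. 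The intervals $(\beta^m_{h(m)},s(m)]$ are order-clopen, so basic neighbourhoods are clopen and regularity is immediate; and because $\beta^m_{h(m)}$ can be pushed arbitrarily close to $s(m)$, no neighbourhood of a point can contain a full convergent block at infinitely many limit points below it, which is why the space contains \emph{no} copy of $\omega^2+1$ at all and the negative relation holds for the constant colouring.

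Your neighbourhoods $\{\alpha\}\cup\bigcup_m s_{\alpha,m}\bigl[[g(m),\omega)\bigr]$ consist only of ladder points, with no interval (hence no recursive control over neighbourhoods of the ladder points themselves). Since the $A_\alpha^m$ are arbitrary unbounded subsets of $\alpha$, they contain limit ordinals, and nothing in the definition of a $\clubsuit_F$-sequence prevents a limit $\beta<\alpha$ with $\beta\notin\bigcup_m A_\alpha^m$ from having $A_\beta^{m'}$ an infinite subset of some $A_\alpha^m$ (for sequences derived from $\diamondsuit^*$ this is typical, since the same $X\subseteq\omega_1$ is guessed at many $\alpha$'s). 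In that situation every basic neighbourhood of $\beta$ meets every basic neighbourhood of $\alpha$, so your space need not even be Hausdorff, let alone $T_3$; and the same configuration ($q_n\in A_p^0$ increasing to $p$ with $A_{q_n}^0\subseteq A_p^0$) produces an honest subspace homeomorphic to $\omega^2+1$. So the step you defer --- a colouring that destroys every copy of $\omega^2+1$ --- is not a "careful write-up" issue: in your topology such copies genuinely exist and there is no reason a single colouring kills them all, whereas in the correct construction no colouring is needed. The fix is structural: collapse the $\omega$ ladders into one interleaved ladder per limit and keep the interval-based neighbourhoods, exactly so that the original Weiss--Komj\'ath regularity and $\omega^2+1$-exclusion arguments apply unchanged.
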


\begin{proof}
	
	Let $\langle A_{\alpha}^{n} : n \in \omega \,\, \alpha \in \omega_{1} \rangle$ be a $\clubsuit_{F}$-sequence and $\alpha \in acc(\omega_{1})$. Let $\gamma_{n}$ be a  strictly increasing sequence converging to $\alpha$. Define $a_{0} = \{t_{0}(0)\}$ where $t_{0}(0)$ is the first element of $A_{\alpha}^{0}$. Given $n \in \omega$, suppose that, for all $m < n$, $a_{m} = \{t_{m}(0), \cdots , t_{m}(m)\}$ is already defined. Let $a_{n} = \{t_{n}(0), \cdots , t_{n}(n)\}$ be given by the following: $t_{n}(j)$ is the least element of $A_{\alpha}^{j}$ that is greater than $\gamma_{n}$ and all $t_{k}(l)$ constructed beforehand. Notice that this is possible since all $A_{\alpha}^{n}$ are unbounded in $\alpha$.
	
	Order the ordinals $t_{k}(l)$ as a strictly increasing sequence $s(n)$ converging to $\alpha$. Now, for each $n \in \omega$, if $s(n)$ is a limit ordinal, let $(\beta_{i}^{n})_{i \in \omega}$ be a strictly increasing sequence converging to $s(n)$. Otherwise consider $\beta_{i}^{n} = s(n)-1$. We shall refine the topology in $\omega_{1}$ by considering new neighbourhoods of $\alpha$ given by $N_{\alpha}(h,p) = \{\alpha\} \cup (\bigcup\{(\beta_{h(m)}^{m}, s(m)] : m \geq p\})$, where $h \in \omega^{\omega}$ and $p \in \omega$.
	
	As in the construction \cite{WeissKomjath}, our space is regular and it cannot contain any copy of $\omega^{2}+1$ because of the new topology. Then, we just have to verify that $X \rightarrow (top \, \omega + 1)^{1}_{\omega}$. For that we fix any coloring $f:\omega_{1} \rightarrow \omega$. Using (2) of $\clubsuit_{F}$ there are $m,n \in \omega$ and $\alpha \in \omega_{1}$ such that $\alpha \in f^{-1}[\{n\}]$ and $A_{\alpha}^{m} \subset f^{-1}[\{n\}]$. Now, by construction, the elements $\gamma \in A_{\alpha}^{m}$ that are in the new neighbourhood of $\alpha$ together with $\alpha$ are the monochromatic copy of $\omega + 1$.
\end{proof}

\vspace*{0,25cm}

One possible way to verify that there is a space as in Theorem \ref{exampleclub} with character $\omega_{1} = \mathfrak{b} < \mathfrak{c}$ would be to prove the consistency of $\clubsuit_{F} + \neg CH + \mathfrak{b} = \mathfrak{d}$. We still do not know the answer for the following: 

\begin{per}
	$Con(ZFC) \rightarrow Con(ZFC + \clubsuit_{F} + 2^{\aleph_0} > \omega_1 + \mathfrak{d}=\mathfrak{b})$?
\end{per}

Nevertheless, in the next theorem we show that in the generic extensions given by Theorem \ref{CohenCS}, there exists a space with the properties mentioned above. 

\begin{thm} \label{Cor1}
	Assume $\diamondsuit^{*}$ and that $\kappa $ is a regular cardinal such that $\kappa \geq \aleph_2$ and, for every $\alpha < \kappa$, we have $\alpha^{\omega}<\kappa$. Let $\mathbb{P}_{\kappa}$ be the Cohen $CS^{*}$-iteration of length $\kappa$ and $G$ be a $\mathbb{P}_{\kappa}$-generic. Then, in $V[G]$, there exists a topological space $(X,\tau)$ such that $X \rightarrow (top \, \omega+1)^{1}_{\omega}$,  $X \nrightarrow (top \, \omega^{2}+1)^{1}_{\omega}$ and $\chi(X)=\omega_1 < \mathfrak{c}$.
\end{thm}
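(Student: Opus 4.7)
The plan is to construct the witness space inside the ground model $V$ and transfer its properties to $V[G]$, using crucially that the Cohen $CS^{*}$-iteration does not add a real $\leq^{*}$-dominating $\omega^{\omega}\cap V$. Working in $V$, fix a $\diamondsuit^{*}$-sequence $\vec{B}$ and let $\vec{A}$ be its derived $\clubsuit_{F}$-sequence. Apply the construction from the proof of Theorem \ref{exampleclub} to $\vec{A}$ inside $V$: for each $\alpha\in\acc(\omega_{1})$ choose the interleaving sequence $s_{\alpha}:\omega\to\alpha$ meeting each $A_{\alpha}^{m}$ infinitely often, and, whenever $s_{\alpha}(n)$ is a limit, an increasing sequence $(\beta_{i}^{\alpha,n})_{i<\omega}$ converging to $s_{\alpha}(n)$. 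Let $\mathcal{B}$ be the resulting basis of sets $N_{\alpha}(h,p)$ with $h\in\omega^{\omega}\cap V$ and $p<\omega$ (supplemented by the order basis at non-limit coordinates), and let $\tau$ be the topology on $X:=\omega_{1}$ that $\mathcal{B}$ generates; I read $(X,\tau)$ under the same basis both in $V$ and in $V[G]$. Since $\diamondsuit^{*}$ implies CH, $|\omega^{\omega}\cap V|=\omega_{1}$ and hence $|\mathcal{B}_{\alpha}|=\omega_{1}$ at each limit $\alpha$.

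I would next verify the positive Ramsey property in $V[G]$. By Corollary \ref{Cor2}, $\vec{A}$ remains a $\clubsuit_{F}$-sequence in $V[G]$, so given any $f:\omega_{1}\to\omega$ in $V[G]$, clause~(2) furnishes $\alpha,n,m$ with $\alpha\in f^{-1}[\{n\}]$ and $A_{\alpha}^{m}\subseteq f^{-1}[\{n\}]$. The set $s_{\alpha}[\omega]\cap A_{\alpha}^{m}$ is infinite by the interleaving, and the verification from the proof of Theorem \ref{exampleclub}---which depends only on the basis $\mathcal{B}$ and not on the ambient universe---shows that every infinite subsequence of $s_{\alpha}$ converges to $\alpha$ in $\tau$. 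This yields a monochromatic copy of $top\,\omega+1$, so $X\to(top\,\omega+1)^{1}_{\omega}$ in $V[G]$.

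For $\chi(X)=\omega_{1}<\mathfrak{c}$, the upper bound $\chi(X)\leq\omega_{1}$ follows because $\mathbb{P}_{\kappa}$ preserves $\omega_{1}$ (Corollary \ref{omega1}), keeping $|\mathcal{B}_{\alpha}|=\omega_{1}$ in $V[G]$. For the lower bound at a limit $\alpha$, any local base may be thinned without increasing cardinality to a subfamily $\{N_{\alpha}(h_{k},p_{k}):k<\lambda\}\subseteq\mathcal{B}_{\alpha}$, which is a local base precisely when $\{h_{k}\}_{k}$ is $\leq^{*}$-dominating over $\omega^{\omega}\cap V$. Since the Cohen $CS^{*}$-iteration adds no real $\leq^{*}$-dominating $\omega^{\omega}\cap V$ (any $g\in V[G]$ lies in some $V[G\restriction\xi]$, while the Cohen real added at step $\xi+1$ bears witness to a ground-model real unbounded with respect to $g$), taking the pointwise running maximum $g(n):=\max_{k\leq n}h_{k}(n)$ rules out any countable such family, so $\chi(X,\alpha)\geq\omega_{1}$. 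Finally, $\mathfrak{c}\geq\mathfrak{d}\geq\kappa>\omega_{1}$ in $V[G]$ by Lemma \ref{dmaiorkappa}.

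The main obstacle is the negative Ramsey clause $X\nrightarrow(top\,\omega^{2}+1)^{1}_{\omega}$ in $V[G]$, since $V[G]$ may carry new sequences that could, a priori, assemble an unexpected copy. The plan is to show that any sequence in $V[G]$ converging to a limit $\alpha$ in $\tau$ must, after trimming a finite tail, be contained in $s_{\alpha}[\omega]$: if infinitely many terms $t_{k}$ were to lie strictly inside intervals $(\beta^{\alpha,m_{k}}_{j_{k}},s_{\alpha}(m_{k}))$ with $m_{k}\to\infty$, then the function sending $m_{k}\mapsto j_{k}$ would have to $\leq^{*}$-dominate every $h\in\omega^{\omega}\cap V$, contradicting the same no-dominating-real property used above. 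Applying this observation first to an intermediate sequence $(t_{n})\to\alpha$ and then to each layer $(s^{n}_{m})\to t_{n}$ confines any hypothetical copy of $\omega^{2}+1$ to the ground-model scaffolding $s_{\alpha}[\omega]$ and $\{s_{t_{n}}[\omega]\}_{n}$, where the argument of \cite[Theorem 4]{WeissKomjath} already forbids the required convergence of all diagonal sequences to $\alpha$. This reduction from $V[G]$ back to the ground-model situation is the delicate step of the proof.
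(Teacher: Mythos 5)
Your overall strategy coincides with the paper's: build the space in $V$ from the derived $\clubsuit_{F}$-sequence via Theorem~\ref{exampleclub}, regenerate the topology in $V[G]$ from the same ground-model basis, obtain the positive relation from Theorem~\ref{CohenCS} (equivalently Corollary~\ref{Cor2}), and get $\omega_{1}<\mathfrak{c}$ from Lemma~\ref{dmaiorkappa}. You are in fact more explicit than the paper about the two points that do not transfer for free, namely the lower bound $\chi(X)\geq\omega_{1}$ and the persistence of $X\nrightarrow(top\,\omega^{2}+1)^{1}_{\omega}$ in $V[G]$: the paper disposes of the first by asserting that $\chi(X)=\omega_{1}$ in $V$ transfers, is silent on the second, and later recovers $\mathfrak{b}\leq\chi(X)$ indirectly from Theorem~\ref{main1} (Corollary~\ref{b1}) --- a route you could also take for the character once the negative relation is secured.

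The genuine gap is in the fact you lean on twice: that the Cohen $CS^{*}$-iteration adds no real $\leq^{*}$-dominating $\omega^{\omega}\cap V$. Your justification --- $g$ lies in some $V[G\restriction\xi]$ and the Cohen real added at stage $\xi+1$ is unbounded over $V[G\restriction\xi]$ --- only shows that $g$ is not dominating \emph{in $V[G]$}; this is exactly Lemma~\ref{dmaiorkappa} ($\mathfrak{d}\geq\kappa$). The Cohen real at stage $\xi+1$ is not a ground-model real, so it says nothing about whether $g$ dominates $\omega^{\omega}\cap V$. What you actually need is that the unboundedness of the ground-model reals is preserved along the entire iteration, a preservation statement that requires its own proof (for a single Cohen step it follows from preservation of non-meagerness of $V\cap\omega^{\omega}$; for the $CS^{*}$-iteration one needs an iteration lemma in the spirit of \cite{fuchino}). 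Without it, neither your direct argument for $\chi(X)\geq\omega_{1}$ nor your confinement of a putative copy of $\omega^{2}+1$ in $V[G]$ to the ground-model scaffolding goes through; and that confinement is itself only sketched at the final step, where you defer to the argument of \cite[Theorem 4]{WeissKomjath}. To be fair, the paper's own proof does not address the persistence of the negative relation in $V[G]$ at all, so you have correctly isolated the delicate point; you have just not yet supplied the preservation fact that your fix requires.
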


\begin{proof}
	Let $\vec{A}=\langle A_{\alpha}^{m} : \alpha < \omega_1 \wedge n \in \omega \rangle $ be the $\clubsuit_{F}$-sequence derived from a $\diamondsuit^{*}$-sequence $\vec{B}$ in $V$. Let $(X,\bar{\tau})$ the space obtained by applying, in V,  Theorem \ref{exampleclub} to  $\vec{A}$. Let  $(X,\tau)$ be the topological space generated, in $V[G]$, using $\bar{\tau}$ as a basis. We will prove that $(X,\tau)$ is the space we wanted. Let $p \in \mathbb{P}_{\kappa} $ and let $\sigma$ be a $\mathbb{P}_{\kappa}$-name such that \[p \forces \sigma: \omega_{1} \rightarrow \omega.  \] Recall that from Theorem \ref{CohenCS} we have $\forces_{\mathbb{P}_{\kappa}} \check{\omega_{1}} = \omega_1 $. We will find $q \leq p $, $\dot{S}$ and $n$ such that $$q \forces \sigma[\dot{S}] = \{n\} \wedge \dot{S} \subseteq \omega_1 \wedge   ``(\dot{S},\tau) \text{ is homeomorphic to } (\omega + 1, \in)"  .$$ 
	
	We apply Theorem \ref{CohenCS} to $\vec{A}$, $\sigma$ and $p$, to obtain $p_*$, $\eta < \omega_1$ and $n,m \in \omega$ such that 
	\begin{gather*} p_* \forces (  \check{A}_{\eta}^{m} \subseteq \sigma^{-1}[\{\check{n}\}] \wedge \sigma(\check{\eta})=\check{n})\end{gather*}
	
	Next, we verify that $p_*$ is the condition $q$ that we seek.\\
	
	Let $ \vec{w} =\{w_t : t \in \omega \}$ be the sequence given by Theorem \ref{exampleclub} such that $\vec{w} \subseteq  A_{\eta}^{m}$ and $\vec{w} \nearrow \eta$. Let $\dot{S}= \check{w} \cup\{\check{\eta}\}$ and notice that $(w \cup \{\eta\},\bar{\tau}) \mbox{ is homeomorphic to } (\omega+1, \in)$. Therefore $$p_* \forces ``(\dot{S},\tau) \mbox{ is homeomorphic to } (\omega+1, \in)".$$  We also have $p_* \forces \dot{S} \subseteq \check{A}^{m}_{\eta} \cup \{\eta\} \subseteq \sigma^{-1}[\{n\}].$
	Since $CH$ holds in $V$ we have $\chi(X,\tau)=\omega_1$ in $V$ and therefore $\chi(X,\tau)=\omega_1$ in $V[G]$. From Corollary \ref{omega1} we have $\omega^{V[G]}_{1} < \mathfrak{c}$. Thus $(X,\tau)$ is the space we wanted.
\end{proof}

\begin{cor}\label{ConCor1}
	Assume the consistency of $ZFC$. Then $ZFC$ is consistent with the existence of a topological space $(X,\tau)$ such that $X \rightarrow (top \, \omega+1)^{1}_{\omega}$,  $X \nrightarrow (top \, \omega^{2}+1)^{1}_{\omega}$ and $\chi(X)=\omega_1 < \mathfrak{c}$.
\end{cor}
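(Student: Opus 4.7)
The plan is to reduce to Theorem \ref{Cor1} by producing a ground model that satisfies its hypotheses. Starting from a model of $ZFC$, I would pass to the constructible universe $L$, which is an inner model of $ZFC$ in which $V = L$ holds; this is the standard Gödel construction and does not require any additional consistency strength. In $L$ we have $\diamondsuit^{*}$ (indeed, the canonical $L$-construction yields $\diamondsuit^{*}$ on $\omega_{1}$), and we also have GCH, so in particular $\aleph_{1}^{\omega} = \aleph_{1} < \aleph_{2}$.

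Next, I would take $\kappa = \aleph_{2}$ as the length of the iteration. Since GCH holds in $L$, for every $\alpha < \aleph_{2}$ we have $|\alpha| \le \aleph_{1}$ and hence $\alpha^{\omega} \le \aleph_{1}^{\omega} = \aleph_{1} < \aleph_{2} = \kappa$; moreover $\kappa$ is regular and $\kappa \ge \aleph_{2}$. So all the hypotheses of Theorem \ref{Cor1} are met in $L$. Let $\mathbb{P}_{\kappa}$ be the Cohen $CS^{*}$-iteration of length $\kappa$ constructed in $L$, and let $G$ be $\mathbb{P}_{\kappa}$-generic over $L$.

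Applying Theorem \ref{Cor1} inside $L[G]$, we obtain a topological space $(X,\tau) \in L[G]$ satisfying $X \rightarrow (top\,\omega+1)^{1}_{\omega}$, $X \nrightarrow (top\,\omega^{2}+1)^{1}_{\omega}$, and $\chi(X) = \omega_{1} < \mathfrak{c}$. Since $L[G]$ is a model of $ZFC$ built from the assumption $\mathrm{Con}(ZFC)$, this establishes the desired consistency statement.

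The main (and really only) obstacle is checking that the hypotheses of Theorem \ref{Cor1} are genuinely available from bare $\mathrm{Con}(ZFC)$; this is handled by invoking $L$, together with the two standard facts that $V = L$ implies $\diamondsuit^{*}$ and that $V = L$ implies GCH. All other work is already packaged inside Theorems \ref{CohenCS}, \ref{main1}, \ref{exampleclub}, and \ref{Cor1}.
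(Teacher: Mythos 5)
Your proposal is correct and follows essentially the same route as the paper: pass to $L$ to obtain $\diamondsuit^{*}$ and the cardinal arithmetic hypothesis, then force with the Cohen $CS^{*}$-iteration and invoke Theorem \ref{Cor1} in the extension. Your explicit choice $\kappa=\aleph_{2}$ with the GCH computation $\alpha^{\omega}\le\aleph_{1}^{\omega}=\aleph_{1}<\aleph_{2}$ is, if anything, slightly more careful than the paper's blanket claim for arbitrary regular $\kappa\ge\aleph_{2}$.
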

\begin{proof}
	If $V=L$, then $\diamondsuit^{*}$ holds, and if $\kappa \geq \aleph_{2}$ is a regular cardinal, then, for all $\alpha < \kappa$, we have $\alpha^{\omega} < \kappa$. Working in $L$, let $\kappa \geq \aleph_2 $ be a regular cardinal. By Theorem \ref{CohenCS}, if $G$ is a $\mathbb{P}_{\kappa}$-generic then $V[G]$ is a model of $ZFC$ where, by Theorem \ref{Cor1}, there exists $(X,\tau)$ as in the hypothesis of the corollary. \end{proof}

Now, considering $(X,\tau)$ as in Theorem \ref{Cor1}, applying Theorem \ref{main1}, it follows that  $\mathfrak{b} \leq \chi(X)$. Therefore we must have the following:

\begin{cor} \label{b1} Suppose $\diamondsuit^{*}$ holds, $\kappa \geq \aleph_2$ is a regular cardinal, and, for all $\alpha < \kappa$, $\alpha^{\omega} < \kappa$. Then in any $\mathbb{P}_{\kappa}$-generic extension $\mathfrak{b} = \omega_{1}$ .
\end{cor}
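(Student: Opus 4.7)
The plan is to chain together the two main theorems of the paper in a contrapositive fashion. Fix $V$ in which $\diamondsuit^{*}$ holds and $\kappa$ is a regular cardinal with $\kappa \geq \aleph_{2}$ and $\alpha^{\omega} < \kappa$ for every $\alpha < \kappa$, and let $G$ be $\mathbb{P}_{\kappa}$-generic. Work entirely in $V[G]$. Note that $\omega_{1}^{V[G]} = \omega_{1}^{V}$ by Theorem \ref{CohenCS} (or the footnote reference to \cite[Corollary~5.3]{fuchino}), so $\omega_{1}$ is a well-defined cardinal in the extension and $\mathfrak{b} \geq \omega_{1}$ automatically.

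The heart of the proof is to invoke Theorem \ref{Cor1}, which supplies a regular topological space $(X,\tau) \in V[G]$ such that $X \rightarrow (top \, \omega+1)^{1}_{\omega}$, $X \nrightarrow (top \, \omega^{2}+1)^{1}_{\omega}$, and $\chi(X) = \omega_{1}$. Since $\omega^{2} < \omega_{1}$, Theorem \ref{main1} applied to $(X,\tau)$ tells us that if $\chi(X) < \mathfrak{b}$ were to hold, then we would obtain $X \rightarrow (top \, \omega^{2}+1)^{1}_{\omega}$, contradicting the failure of this partition property. Consequently $\mathfrak{b} \leq \chi(X) = \omega_{1}$, and combining this with the lower bound $\mathfrak{b} \geq \omega_{1}$ yields $\mathfrak{b} = \omega_{1}$ in $V[G]$.

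There is essentially no obstacle once the machinery of Sections \ref{Comb} and \ref{Top} is in place: the argument is just the contrapositive of Theorem \ref{main1} applied to the space witnessed by Theorem \ref{Cor1}. The only subtlety worth flagging is making sure one is reading $\omega_{1}$, $\mathfrak{b}$ and $\chi(X)$ in the same model $V[G]$, which is unproblematic since Theorem \ref{CohenCS} guarantees preservation of $\omega_{1}$ and the character of the space computed in $V$ via the basis $\bar{\tau}$ transfers to the same value $\omega_{1}$ in $V[G]$ (as noted in the proof of Theorem \ref{Cor1}).
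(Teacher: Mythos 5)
Your proof is correct and follows essentially the same route as the paper: the authors likewise take the space $(X,\tau)$ from Theorem \ref{Cor1} and apply Theorem \ref{main1} in contrapositive form to conclude $\mathfrak{b} \leq \chi(X) = \omega_{1}$, hence $\mathfrak{b} = \omega_{1}$ in $V[G]$. Your extra remarks on preservation of $\omega_{1}$ and on reading all invariants in the same model are consistent with (and slightly more careful than) the paper's one-line justification.
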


Let $(Y,\tau)$ be a space obtained by applying Theorem \ref{exampleclub} to a $\clubsuit_{F}$-sequence $\vec{A}$. It is easy to see that $\chi(Y) \leq \mathfrak{d}$. In the following lemmas we will present sufficient conditions on $\vec{A}$ which imply that the character of $(Y,\tau)$ is $\mathfrak{d}$.

\begin{lemma}\label{character}
	If $\vec{A}$ is a $\clubsuit_{F}$-sequence such that there exists  a limit ordinal $\alpha \in \omega_{1}$ and $m \in \omega$ satisfying $A_{\alpha}^{m} \subset acc(\omega_{1})$, then $\chi(Y) = \mathfrak{d}$. 
\end{lemma}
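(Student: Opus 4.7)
The plan is to exhibit a single point of $Y$ whose local character is at least $\mathfrak{d}$; combined with the observation $\chi(Y)\le\mathfrak{d}$ made in the paragraph preceding the lemma, this finishes the proof. The natural candidate is the $\alpha$ given by the hypothesis. Recall from the proof of Theorem~\ref{exampleclub} that the basic neighborhoods of $\alpha$ are $N_\alpha(h,p)$ for $h\in\omega^\omega$ and $p\in\omega$, built from a strictly increasing sequence $s$ converging to $\alpha$ and auxiliary sequences $(\beta_i^n)_i$ that converge to $s(n)$ when $s(n)$ is a limit ordinal and are the constant $s(n)-1$ otherwise.

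First I would verify that the set $L=\{n\in\omega : s(n)\text{ is a limit ordinal}\}$ is infinite. Since $s$ is the increasing enumeration of $\{t_k(l):0\le l\le k<\omega\}$ and by construction $t_k(m)\in A_\alpha^m\subseteq\acc(\omega_1)$ for every $k\ge m$, infinitely many $s(n)$ must be limits. The plan is then to take a putative neighborhood basis $\{N_\alpha(h_\xi,p_\xi):\xi<\kappa\}$ at $\alpha$ and extract from it a dominating family in $\omega^L\cong\omega^\omega$, forcing $\kappa\ge\mathfrak{d}$. Given an arbitrary $g\in\omega^\omega$, the key move is to inflate $g$ to a function $g^\ast\ge g$ satisfying $\beta_{g^\ast(n)}^n>s(n-1)$ for every $n\in L$ (possible because $\beta_i^n\to s(n)>s(n-1)$), with $g^\ast(n)=0$ off $L$, and then to pick $\xi$ with $N_\alpha(h_\xi,p_\xi)\subseteq N_\alpha(g^\ast,0)$.

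The central calculation is that this inclusion forces $h_\xi(n)\ge g^\ast(n)$ for all but finitely many $n\in L$: if $n\in L$ with $n\ge p_\xi$ and $h_\xi(n)<g^\ast(n)$, then the witness $x=\beta_{g^\ast(n)}^n$ lies in $(\beta_{h_\xi(n)}^n,s(n))\subseteq N_\alpha(h_\xi,p_\xi)$, hence in some $(\beta_{g^\ast(n')}^{n'},s(n')]$, and a short case split on $n'=n$, $n'<n$, and $n'>n$ (using $x>s(n-1)\ge s(n')$ for $n'<n$ and $\beta_{g^\ast(n')}^{n'}>s(n'-1)\ge s(n)>x$ for $n'>n\in L$, with the singleton interval ruling out the case $n'>n\notin L$) excludes every option. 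Hence $\{h_\xi\restriction L:\xi<\kappa\}$ is a dominating family in $\omega^L$, and $\kappa\ge\mathfrak{d}$. The hard part will be this inflation step: the basic neighborhoods $(\beta_{h(n)}^n,s(n)]$ are not pairwise disjoint, so the containment $N_\alpha(h',p')\subseteq N_\alpha(h,p)$ does not a priori translate into pointwise comparison of $h'$ and $h$; the role of $g^\ast$ is precisely to push each surviving interval above $s(n-1)$ so that a witness placed inside one interval cannot be absorbed by any other, which is what makes the case analysis close.
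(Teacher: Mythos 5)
Your proposal is correct and follows essentially the same route as the paper: bound $\chi(Y)\le\mathfrak{d}$ via a dominating family, then extract a dominating family from any local base at the given $\alpha$ by restricting to an infinite set of indices $n$ where $s(n)$ is a limit ordinal (the paper uses $J=\{r : s(r)\in A_\alpha^m\}\subseteq L$) and reading off pointwise domination from neighborhood containment. Your ``inflation'' of $g$ to $g^\ast$ with $\beta^n_{g^\ast(n)}>s(n-1)$ is a welcome extra precaution that justifies the step the paper asserts directly, namely that $N_\alpha(h',p')\subseteq N_\alpha(h,p)$ forces $h'(n)\ge h(n)$ on the relevant indices despite the possible overlap of the intervals $(\beta^{n}_{h(n)},s(n)]$.
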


\begin{proof}
	We have $\chi(Y) \leq \mathfrak{d}$ since the set $\{N_{\alpha}(h,n) : h \in \mathcal{D} \wedge n \in \omega \wedge \alpha \in acc(\omega_{1})\}$, for any given dominating family $\mathcal{D} \subset \omega^\omega$, together with the isolated successors is a basis for $Y$.
	
	Now, assume that $\chi(Y) < \mathfrak{d}$, and take $\alpha$ and $m$ as in the hypothesis of the lemma. Let $\mathcal{B}$ be a base of basic open sets for $\alpha$ of size $< \mathfrak{d}$. We write $\mathcal{B} = \{N(g_{\theta}, n_{\theta}) : \theta < \lambda\}$ for some $\lambda < \mathfrak{d}$. Take $J = \{r \in \omega : s(r) \in A_{\alpha}^{m}\}$ which is infinite by construction. Let $\phi: J \rightarrow \omega$ be the increasing bijection. Note that $\{g_{\theta}\restriction_{J} \circ \phi^{-1} : \theta < \lambda\}$ is a dominating family, which is a contradiction.
	
	Given $f \in \omega^{\omega}$ consider $h : \omega \rightarrow \omega$ defined by $h(i) = f \circ \phi(i)$ if $i \in J$ and $h(i) = 0$ otherwise. $N(h, 0)$ is a neighbourhood for $\alpha$ so there must be $\gamma < \lambda$ such that $N(g_{\gamma},n_{\gamma}) \subset N(h,0)$. If $i \in J \setminus n_{\gamma}$, since $s(i)$ is a limit ordinal, we must have $\beta_{h(i)}^{i} < \beta_{g_{\gamma}(i)}^{i}$ and therefore $h(i) < g_{\gamma}(i)$. This yields $g_{\gamma}\restriction_{J} \circ \phi ^{*}\geq f $ since $h\restriction_{J} = f \circ \phi$.
\end{proof}

\begin{lemma}\label{clubfromdiam}
	If $\vec{A}$ is a $\clubsuit_{F}$-sequence derived from a $\diamondsuit^{*}$-sequence $\vec{B}$, then there exists a limit ordinal $\alpha \in \omega_{1}$ and $m \in \omega$ such that $A_{\alpha}^{m} \subset acc(\omega_{1})$.
\end{lemma}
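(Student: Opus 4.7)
The plan is to exploit the $\diamondsuit^{*}$ guessing property directly on the set $X = \acc(\omega_{1})$. Since $\acc(\omega_{1})$ is a subset of $\omega_{1}$ (in fact, it is the set of countable limit ordinals), the $\diamondsuit^{*}$ hypothesis supplies a club $C \subseteq \omega_{1}$ with the property that for every $\beta \in C$ there exists $m_{\beta} \in \omega$ with $B_{\beta}^{m_{\beta}} = \acc(\omega_{1}) \cap \beta$.

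Next I would observe that $\acc(\omega_{1})$ is itself a club in $\omega_{1}$, and hence so is $\acc(\acc(\omega_{1}))$, the set of its own accumulation points. In particular $C \cap \acc(\acc(\omega_{1}))$ is nonempty (and in fact unbounded), so I can pick some $\alpha$ in this intersection. By the choice of $\alpha$ there is $m \in \omega$ with $B_{\alpha}^{m} = \acc(\omega_{1}) \cap \alpha$, and because $\alpha \in \acc(\acc(\omega_{1}))$ the set $\acc(\omega_{1}) \cap \alpha$ is cofinal in $\alpha$; equivalently, $B_{\alpha}^{m}$ is unbounded in $\alpha$.

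Finally, by the definition of the derived sequence $\vec{A}$, the condition ``$B_{\alpha}^{m}$ is unbounded in $\alpha$'' forces $A_{\alpha}^{m} = B_{\alpha}^{m}$. Thus
\[
A_{\alpha}^{m} \;=\; B_{\alpha}^{m} \;=\; \acc(\omega_{1}) \cap \alpha \;\subseteq\; \acc(\omega_{1}),
\]
and since $\alpha \in \acc(\acc(\omega_{1})) \subseteq \acc(\omega_{1})$, $\alpha$ is a (countable) limit ordinal, so $\alpha$ and $m$ are as required.

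There is really no hard step here: the entire argument is a one-shot application of $\diamondsuit^{*}$ to the explicit set $\acc(\omega_{1})$, combined with the trivial fact that $\acc(\acc(\omega_{1}))$ is a club, so one can always locate a guessing point $\alpha$ where the initial segment of $\acc(\omega_{1})$ captured by $B_{\alpha}^{m}$ is automatically unbounded. The only place to be mildly careful is ensuring that one picks $\alpha$ in $\acc(\acc(\omega_{1}))$ rather than merely in $\acc(\omega_{1})$, so that the guessed set is unbounded in $\alpha$ and the ``otherwise'' clause in the definition of the derived sequence is not triggered.
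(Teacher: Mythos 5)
Your proof is correct and follows essentially the same route as the paper: apply $\diamondsuit^{*}$ to $\acc(\omega_{1})$ to get a guessing club $\mathcal{C}$, and then choose $\alpha \in \mathcal{C}$ that is a limit of limit ordinals so that the guessed set $B_{\alpha}^{m} = \acc(\omega_{1}) \cap \alpha$ is unbounded in $\alpha$ and hence equals $A_{\alpha}^{m}$. Your explicit remark about avoiding the ``otherwise'' clause of the derived sequence is a useful clarification of a point the paper leaves implicit, but the argument is the same.
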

\begin{proof}
	Use $\diamondsuit^{*}$ on the set $acc(\omega_{1})$ to find $\mathcal{C}$ the club that guesses this set. Now, if $\alpha \in \mathcal{C}$ is a limit of limit ordinals, then there is  $m \in \omega$ such that $B_{\alpha}^{m} = acc(\omega_{1}) \cap \alpha$. Therefore $B_{\alpha}^{m} = A_{\alpha}^{m} \subset acc(\omega_{1})$.
\end{proof}

The following corollary contrasts with the construction made in Theorem \ref{Cor1}. Here we first consider the extension by a Cohen $CS^{*}$-iteration and afterwards construct the topological space using a $\clubsuit_{F}$-sequence following Theorem \ref{exampleclub}.

\begin{cor}\label{Yd}
	Let $\mathbb{P}_{\kappa}$ be a Cohen $CS^*$-iteration of length $\kappa$, where $\kappa$ is a regular cardinal $\geq \aleph_2$ such that for every $\alpha < \kappa$ we have $\alpha^{\omega}<\kappa$. Suppose $\vec{B}$ is a $\diamondsuit^{*}$-sequence and  $ \vec{A}=\langle A_{\alpha}^{m} : \alpha < \omega_1 \wedge n \in \omega \rangle $ is the sequence derived from $\vec{B}$. 
	If $G$ is a $\mathbb{P}_{\kappa}$-generic, then there exists a topological space $(Y,\tau)$ in $V[G]$ such that $(Y,\tau)\rightarrow (\omega+1)^{1}_{\omega}$, $(Y,\tau)\nrightarrow (\omega^{2}+1)^{1}_{\omega}$ and $\omega_{1} < \chi(Y)=\mathfrak{d}$
\end{cor}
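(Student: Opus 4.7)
The plan is to combine the two tools already developed in the paper: the preservation of the $\clubsuit_{F}$-sequence by the Cohen $CS^{*}$-iteration (Corollary \ref{Cor2}) and the topological construction of Theorem \ref{exampleclub}. First, I would note that under the hypotheses of the corollary all the assumptions of Theorem \ref{CohenCS} and Corollary \ref{Cor2} are met, so $\omega_{1}^{V} = \omega_{1}^{V[G]}$ and the ground-model sequence $\vec{A}$ remains a $\clubsuit_{F}$-sequence in $V[G]$. Working now inside $V[G]$, I would apply Theorem \ref{exampleclub} to $\vec{A}$ to produce a regular topological space $(Y,\tau)$ on $\omega_{1}$ that already satisfies $(Y,\tau) \rightarrow (\omega+1)^{1}_{\omega}$ and $(Y,\tau) \nrightarrow (\omega^{2}+1)^{1}_{\omega}$, together with the auxiliary conclusion that for every limit $\alpha$ and every $m$ there is an increasing sequence in $A_{\alpha}^{m}$ converging to $\alpha$. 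This immediately gives two of the three desired properties.

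For the character equality $\chi(Y) = \mathfrak{d}$ I would invoke Lemma \ref{character}, whose only nontrivial hypothesis is the existence, in $V[G]$, of a limit ordinal $\alpha$ and $m \in \omega$ with $A_{\alpha}^{m} \subseteq \acc(\omega_{1})$. To secure this hypothesis I would go back to $V$, where $\vec{B}$ is a genuine $\diamondsuit^{*}$-sequence, and apply Lemma \ref{clubfromdiam} to obtain such $\alpha$ and $m$ in $V$. The critical observation is that the statement ``$A_{\alpha}^{m} \subseteq \acc(\omega_{1})$'' is absolute between $V$ and $V[G]$: the set $A_{\alpha}^{m}$ is the same in both models because $\vec{A} \in V$, and $\acc(\omega_{1})$ is the same because $\omega_{1}$ is preserved and the definition ``$\beta = \sup(\omega_{1} \cap \beta) \wedge \beta > 0$'' is absolute. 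Hence the hypothesis of Lemma \ref{character} transfers to $V[G]$, giving $\chi(Y) = \mathfrak{d}$ there.

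Finally, the strict inequality $\omega_{1} < \mathfrak{d}$ in $V[G]$ is exactly the content of Corollary \ref{omega1}, so combining this with the previous step yields $\omega_{1} < \chi(Y) = \mathfrak{d}$, completing the three conditions required by the statement. I do not anticipate any serious obstacle: the only place where care is needed is the absoluteness argument above, which justifies using a lemma proved in $V$ (Lemma \ref{clubfromdiam}, whose hypothesis is full $\diamondsuit^{*}$, not merely the derived $\clubsuit_{F}$-sequence property) as an input to a lemma applied in $V[G]$ (Lemma \ref{character}, which only needs the absolute hypothesis about $\vec{A}$).
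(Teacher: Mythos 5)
Your proposal is correct and follows essentially the same route as the paper's own proof: preserve the $\clubsuit_{F}$-sequence via Corollary \ref{Cor2}, build $(Y,\tau)$ in $V[G]$ with Theorem \ref{exampleclub}, transfer the hypothesis of Lemma \ref{character} from $V$ via Lemma \ref{clubfromdiam}, and conclude $\omega_{1}<\mathfrak{d}$ from Corollary \ref{omega1}. Your explicit absoluteness remark for ``$A_{\alpha}^{m}\subseteq\acc(\omega_{1})$'' is a welcome elaboration of a step the paper states without comment.
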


\begin{proof}
	By Lemma \ref{clubfromdiam}, there exists $\alpha \in \omega_1$ such that $A^{m}_{\alpha} \subseteq acc(\omega_{1})$, which remains true in $V[G]$. By Corollary \ref{Cor2} we have that $\vec{A}$ is a $\clubsuit_{F}$-sequence in $V[G]$. If $Y$ is the space obtained by applying Theorem \ref{exampleclub} in $V[G]$ to $\vec{A}$, it follows from Lemma \ref{character} that $\chi(Y)=\mathfrak{d}$. By Corollary \ref{omega1} we have $\mathfrak{d} > \omega_1^{V[G]}$. Therefore $(Y,\tau)$ is the space we wanted. 
\end{proof}

This is interesting because it highlights that even though the spaces obtained in Theorem \ref{Cor1} and Corollary \ref{Yd} have the same underlying set, and are generated considering the same $\clubsuit_{F}$-sequence, they are different. Indeed, the one from our ground model ends up with a coarser topology than the one generated directly in our extension, to the point where they end up having different characters.

\section*{Acknowledgements}

The first author was supported by CAPES (grant agreement 88882.461730/2019-01). The second author was supported by the European Research Council (grant agreement ERC-2018-StG 802756). We are grateful to Rubens Onishi for presenting a series of seminars on the topic of partitions of topological spaces, that brought to our attention the problem addressed at Section \ref{CB}. We would like to thank William Weiss for his suggestion on how to fix the problem mentioned above, as well as introducing us to the line of work developed at Section \ref{Top}. We would also like to thank Assaf Rinot for his comments and suggestions on preliminary versions of this paper.

\end{document}